\documentclass[10pt,a4paper]{amsart}
\usepackage{amssymb,amsmath,amsfonts}

\textwidth=16.000cm \textheight=22.000cm \topmargin=0.00cm
\oddsidemargin=0.00cm \evensidemargin=0.00cm \headheight=14.4pt
\headsep=1.2500cm \numberwithin{equation}{section}
\hyphenation{semi-stable} \emergencystretch=11pt


\newtheorem{Theorem}{Theorem}[section]
\newtheorem{Proposition}[Theorem]{Proposition}
\newtheorem{cor}[Theorem]{Corollary}
\newtheorem{Lemma}[Theorem]{Lemma}
\theoremstyle{remark}
\newtheorem{Definition}[Theorem]{Definition}
\newtheorem{Example}[Theorem]{Example}

\newtheorem{Remark}[Theorem]{Remark}

\begin{document}

\title{Strong pseudo-amenability of some Banach algebras}

\author[A. Sahami]{A. Sahami}

\email{amir.sahami@aut.ac.ir}

\address{Faculty of Basic sciences, Department of Mathematics, Ilam University, P.O.Box 69315-516, Ilam,
Iran.}

\keywords{Semigroup algebras, Matrix algebras, Strong pseudo-amenability, Locally finite inverse semigroup.}

\subjclass[2010]{Primary 46H05, 43A20, Secondary 20M18.}

\maketitle

\begin{abstract}
In this paper we introduce a new notion of strong pseudo-amenability for Banach algebras. We study strong pseudo-amenability of some Matrix algebras. Using this tool, we characterize strong pseudo-amenability of $\ell^{1}(S)$, provided that $S$ is a uniformly locally finite semigroup. As an application we show that for a Brandt semigroup $S=M^{0}(G,I)$, $\ell^{1}(S)$ is strong pseudo-amenable if and only if $G$ is amenable and $I$ is finite. We give some examples to show the differences of strong pseudo-amenability and other classical notions of amenability.
\end{abstract}
\section{Introduction and Preliminaries}

Johnson introduced the class  of amenable Banach algebras. A Banach algebra $A$ is called amenable, if there exists a bounded net $(m_{\alpha})$ in $A\otimes_{p}A$ such that $a\cdot m_{\alpha}-m_{\alpha}\cdot a\rightarrow 0$ and $\pi_{A}(m_{\alpha})a\rightarrow a$ for every $a\in A.$ For further information about the history of amenability see \cite{run}.

By removing the boundedness condition in the definition of amenability, Ghahramani and Zhang in \cite{ghah pse} introduced and studied two generalized notions of amenability, named pseudo-amenability and pseudo-contractibility. A Banach algebra $A$ is called pseud-amenable(pseudo-contractible)  if there exists a not necessarily bounded net $(m_{\alpha})$ in  $A\otimes_{p}A$ such that $a\cdot m_{\alpha}-m_{\alpha}\cdot a\rightarrow 0(a\cdot m_{\alpha}=m_{\alpha}\cdot a)$ and $\pi_{A}(m_{\alpha})a\rightarrow a$ for every $a\in A,$ respectively.  Recently pseudo-amenablity and pseudo-contractiblity of the  archimedean semigroup algebras  and the uniformly locally finite semigroup algebras have  investigated in \cite{rost}, \cite{rost1} and \cite{maede}. 
In fact the main results of   \cite{rost} and  \cite{rost1} are about characterizing pseudo-amenability and pseudo-contractibility of  $\ell^{1}(S)$, where $S=M^{0}(G,I)$ is  the Brandt semigroup over an index set $I$. They showed that $\ell^{1}(S)$ is pseudo-amenable (pseudo-contractible) if and only if $G$ is amenable ($G$ is a finite group and $I$ is a finite index set), respectively.
 For further information about pseudo-amenability and pseudo-contractibility of general Banach algebras the readers refer to \cite{Choi}.

Motivated by these considerations this question raised  "Is there a notion of amenability which stands between pseudo-amenability and pseudo-contarctibility for the Brandt semigroup algebras?", that is, under which  notion of amenability for  the Brand semigroup algebra $\ell^{1}(M^{0}(G,I))$,    $G$  becomes  amenable and $I$  becomes finite. In order to answer this question author defines a new notion of amenability, named strong pseudo-amenability.  Here we give the definition of our new notion.
\begin{Definition}
A Banach algebra $A$ is called {\it strong pseudo-amenable}, if there exists a (not necessarily bounded) net $(m_{\alpha})_{\alpha}$ in $(A\otimes_{p}A)^{**}$ such that $$a\cdot m_{\alpha}-m_{\alpha}\cdot a\rightarrow 0,\quad a\pi_{A}^{**}(m_{\alpha})=\pi_{A}^{**}(m_{\alpha})a\rightarrow a\qquad(a\in A).$$
\end{Definition}
In this paper, we study the basic properties of strong pseudo-amenable Banach algebras. We show that strong pseudo-amenability is weaker than pseudo-contractibility but it is stronger than pseudo-amenability. We investigate strong pseudo-amenability of matrix algebras. Using this tool we characterize strong pseudo-amenability of $\ell^{1}(S)$, whenever $S$ is a uniformly locally finite semigroup. In particular,  we show that $\ell^{1}(S)$ is strong pseudo amenable if and only if $I$ is a finite index set and $G$ is amenable, where $S=M^{0}(G,I)$ is the Brandt semigroup. Finally we give some examples that shows the differences between strong pseudo-amenability and other classical concepts of amenability.

We present  some standard notations and definitions that we
shall need in this paper. Let $A$ be a Banach algebra. If $X$ is a
Banach $A$-bimodule, then  $X^{*}$ is also a Banach $A$-bimodule via
the following actions
$$(a\cdot f)(x)=f(x\cdot a) ,\hspace{.25cm}(f\cdot a)(x)=f(a\cdot x ) \hspace{.5cm}(a\in A,x\in X,f\in X^{*}). $$
Let $A$ and $B$ be   Banach algebras. The projective tensor product
 $A\otimes_{p}B$ with the following multiplication is a Banach algebra
$$(a_{1}\otimes b_{1})(a_{2}\otimes b_{2})=a_{1}a_{2}\otimes b_{1}b_{2}\quad (a_{1},a_{2}\in A, b_{1}b_{2}\in B).$$
Also $A\otimes_{p}A$ with the following action becomes a Banach $A-$bimodule:
$$a_{1}\cdot a_{2}\otimes a_{3}=a_{1} a_{2}\otimes a_{3},\quad a_{2}\otimes a_{3}\cdot a_{1}=a_{2}\otimes a_{3}a_{1},\quad (a_{1},a_{2},a_{3}\in A).$$
 The product morphism
$\pi_{A}:A\otimes_{p}A\rightarrow A$ is  specified by
$\pi_{A}(a\otimes b)=ab$ for every $a,b\in A$.

\section{Basic properties of strong pseudo-amenability}
\begin{Proposition}\label{strong give psudo}
Let $A$ be a strong pseudo-amenable Banach algebra. Then $A$ is pseudo-amenable.
\end{Proposition}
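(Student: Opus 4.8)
The plan is to start from a strong pseudo-amenable net $(m_\alpha)_\alpha$ in $(A\otimes_p A)^{**}$ and to manufacture from it an (unbounded) approximate diagonal net living in $A\otimes_p A$ itself, thereby witnessing pseudo-amenability. First I would fix $\alpha$ and recall that $A\otimes_p A$ is norm-dense, hence weak$^*$-dense, in $(A\otimes_p A)^{**}$, so by Goldstine's theorem there is a net $(n_\beta^{\alpha})_\beta$ in $A\otimes_p A$ with $n_\beta^{\alpha}\xrightarrow{w^*} m_\alpha$. Since the module actions of $A$ on $(A\otimes_p A)^{**}$ and the map $\pi_A^{**}$ are weak$^*$-continuous in the module variable, for each $a\in A$ we get $a\cdot n_\beta^{\alpha}-n_\beta^{\alpha}\cdot a\xrightarrow{w^*} a\cdot m_\alpha-m_\alpha\cdot a$ in $(A\otimes_p A)^{**}$ and $\pi_A(n_\beta^{\alpha})a\xrightarrow{w^*}\pi_A^{**}(m_\alpha)a$ in $A^{**}$ (here using $\pi_A^{**}|_{A\otimes_p A}=\pi_A$).

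The next step is a standard convexity/normalization argument. The weak$^*$ limits just described are weak limits once we pass to the original spaces, so for a fixed finite set $F\subseteq A$ and $\varepsilon>0$ the element $\big((a\cdot n_\beta^{\alpha}-n_\beta^{\alpha}\cdot a)_{a\in F},(\pi_A(n_\beta^{\alpha})a-a)_{a\in F}\big)$ converges weakly, over $\beta$, to $\big((a\cdot m_\alpha-m_\alpha\cdot a)_{a\in F},(\pi_A^{**}(m_\alpha)a-a)_{a\in F}\big)$ in a finite product of the relevant Banach spaces. Passing to convex combinations (Mazur's lemma), I can replace the weak convergence by norm convergence without leaving $A\otimes_p A$, since $A\otimes_p A$ is convex. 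Then an iterated-limits / diagonal argument over the directed set of triples $(\alpha,F,\varepsilon)$ produces a single net $(\wtd m_\gamma)_\gamma$ in $A\otimes_p A$ with $a\cdot\wtd m_\gamma-\wtd m_\gamma\cdot a\to 0$ and $\pi_A(\wtd m_\gamma)a-a\to 0$ for every $a\in A$; the second convergence uses that $a\pi_A^{**}(m_\alpha)=\pi_A^{**}(m_\alpha)a\to a$ from the definition, and the norm on $A$ restricted from $A^{**}$ is the original norm. This net witnesses pseudo-amenability.

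The main obstacle I anticipate is bookkeeping the two iterated limits cleanly: one passes from $(\beta$-limit, for fixed $\alpha)$ to convex combinations to a genuine net, and one must check that weak$^*$-continuity of the left and right module actions of a \emph{fixed} $a\in A$ on $(A\otimes_p A)^{**}$ really does apply (it does, because these are the adjoints of weak$^*$-continuous-on-bounded-sets maps; more simply, for fixed $a$, $x\mapsto a\cdot x$ and $x\mapsto x\cdot a$ on $(A\otimes_p A)^{**}$ are weak$^*$--weak$^*$ continuous as second adjoints of the corresponding maps on $A\otimes_p A$). Similarly $\pi_A^{**}$ is weak$^*$--weak$^*$ continuous as a second adjoint. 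Once these continuity points are granted, the argument is the routine ``replace $(\,)^{**}$-nets by honest nets via Goldstine $+$ Mazur'' scheme, and no quantitative estimates beyond triangle inequalities are needed. I would present it compactly, citing Goldstine and Mazur and leaving the diagonal reindexing to the reader.
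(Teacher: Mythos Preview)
Your proposal is correct and follows essentially the same route as the paper: Goldstine to pull the net $(m_\alpha)$ from $(A\otimes_p A)^{**}$ down to $A\otimes_p A$ in the weak$^*$ topology, an iterated-limit reindexing over $(\alpha,F,\varepsilon)$ to obtain a single net with weak convergence to $0$, and then Mazur's lemma to upgrade weak convergence to norm convergence. One small ordering point worth tightening: since for fixed $\alpha$ the $\beta$-limits $a\cdot m_\alpha-m_\alpha\cdot a$ and $\pi_A^{**}(m_\alpha)a-a$ lie in the biduals rather than in $A\otimes_p A$ and $A$, the convergence over $\beta$ is only weak$^*$, so it is cleaner (and this is what the paper does) to combine the $\alpha$- and $\beta$-limits first---so that the target is $0$, which does lie in the base space---and only then invoke Mazur.
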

\begin{proof}
Since  $A$  is strong pseudo-amenable, there exists a net $(m_{\alpha})$ in $(A\otimes_{p}A)^{**}$  such that $a\cdot m_{\alpha}-m_{\alpha}\cdot a\rightarrow 0$ and $\pi^{**}_{A}(m_{\alpha})a=a\pi^{**}_{A}(m_{\alpha})\rightarrow a,$ for every $a\in A.$ Take $\epsilon>0$ and arbitrary finite subsets $F\subseteq A, \Lambda\subseteq (A\otimes_{p}A)^{*}$ and $L\subseteq A^{*}.$ It follows that $$||a\cdot m_{\alpha}-m_{\alpha}\cdot a||<\epsilon,\quad ||\pi^{**}_{A}(m_{\alpha})a-a||<\epsilon,$$ for every $a\in F.$  It is well-known that for each $\alpha$, there exists a net $(n^{\alpha}_{\beta})$ in $A\otimes_{p}A$ such that $n^{\alpha}_{\beta}\xrightarrow{w^{*}}m_{\alpha}.$ Thus using  $w^*$-continuty of  $\pi^{**}_{A}$, we have
 $$\pi_{A}(n^{\alpha}_{\beta})=\pi^{**}_{A}(n^{\alpha}_{\beta})\xrightarrow{w^{*}}\pi_{A}^{**}(m_{\alpha}).$$
Hence there exists $\beta=\beta(\epsilon,F,\Lambda, L)$ such that $$|a\cdot n^{\alpha}_{\beta(\epsilon,F,\Lambda, L)}(f)-a\cdot m_{\alpha}(f)|<\frac{\epsilon}{K},\quad |n^{\alpha}_{\beta(\epsilon,F,\Lambda, L)}\cdot a(f)-m_{\alpha}\cdot a(f)|<\frac{\epsilon}{K}$$
and $$|\pi_{A}(n^{\alpha}_{\beta(\epsilon,F,\Lambda, L)})a(g)-\pi^{**}_{A}(m_{\alpha})a(g)|<\frac{\epsilon}{2L_{0}},\quad | \pi^{**}_{A}(m_{\alpha})a(g)-a(g)|<\frac{\epsilon}{2L_{0}} $$ for every $a\in F, f\in \Lambda$ and $g\in L,$ where $K=\sup\{||f||:f\in \Lambda\}$ and $L_{0}=\sup\{||f||:f\in L\}$. So for a $c\in \mathbb{R}^{+}$ we have 
$$|a\cdot n^{\alpha}_{\beta(\epsilon,F,\Lambda, L)}-n^{\alpha}_{\beta(\epsilon,F,\Lambda, L)}\cdot a|<c\frac{\epsilon}{K}$$
and $$|a(g)-\pi_{A}(n^{\alpha}_{\beta(\epsilon,F,\Lambda, L)})a(g)|<\frac{\epsilon}{L_{0}},$$ for every $a\in F, f\in \Lambda$ and $g\in L$.
It follows that there exists a  net $(n^{\alpha}_{\beta(\epsilon,F,\Lambda, L)})_{(\alpha,\epsilon,F,\Lambda, L)}$ in $A\otimes_{p}A$ which satisfies $$a\cdot n^{\alpha}_{\beta(\epsilon,F,\Lambda, L)}-n^{\alpha}_{\beta(\epsilon,F,\Lambda, L)}\cdot a\xrightarrow{w}0,\quad \pi_{A}(n^{\alpha}_{\beta(\epsilon,F,\Lambda, L)})a-a\xrightarrow{w}0,\qquad (a\in A).$$
Using Mazur Lemma we can assume that 
$$a\cdot n^{\alpha}_{\beta(\epsilon,F,\Lambda, L)}-n^{\alpha}_{\beta(\epsilon,F,\Lambda, L)}\cdot a\xrightarrow{||\cdot||}0,\quad \pi_{A}(n^{\alpha}_{\beta(\epsilon,F,\Lambda, L)})a-a\xrightarrow{||\cdot||}0,\qquad (a\in A).$$ Therefore $A$ is pseudo-amenable.
\end{proof}
 Let $A$ be a Banach algebra and $\phi\in\Delta(A).$
A Banach algebra $A$ is called approximately left $\phi-$amenable, if there exists a (not nessecarily bounded) net $(n_{\alpha})$ in $A$ such that $$am_{\alpha}-\phi(a)m_{\alpha}\rightarrow 0,\quad \phi(m_{\alpha})\rightarrow 1,\qquad (a\in A).$$
For further information see \cite{agha}.
\begin{cor}\label{ap phi}
Let $A$ be a Banach algebra and $\phi\in\Delta(A).$ If $A$ is strong pseudo-amenable, then $A$ is approximately left $\phi$-amenable.
\end{cor}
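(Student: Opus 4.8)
The plan is to reduce everything to Proposition~\ref{strong give psudo}: a strong pseudo-amenable algebra is pseudo-amenable, so I may fix a (not necessarily bounded) net $(m_{\alpha})$ in $A\otimes_{p}A$ with $a\cdot m_{\alpha}-m_{\alpha}\cdot a\to 0$ and $\pi_{A}(m_{\alpha})a\to a$ for every $a\in A$, and then push this net forward into $A$ along a suitable module map. The delicate analytic part — the $w^{*}$-to-norm descent from $(A\otimes_{p}A)^{**}$ to $A\otimes_{p}A$ via Goldstine and Mazur — has already been carried out inside Proposition~\ref{strong give psudo}, so what is left is essentially formal.

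Concretely, I would consider the bounded bilinear map $A\times A\to A$, $(a,b)\mapsto\phi(b)a$, which by the universal property of $\otimes_{p}$ induces a bounded linear map $T\colon A\otimes_{p}A\to A$ with $T(a\otimes b)=\phi(b)a$. Checking on elementary tensors and extending by linearity and continuity gives $T(c\cdot u)=c\,T(u)$ and $T(u\cdot c)=\phi(c)\,T(u)$ for all $c\in A$ and $u\in A\otimes_{p}A$; that is, $T$ is a left $A$-module morphism that converts the right action on $A\otimes_{p}A$ into multiplication by the scalar $\phi(\cdot)$. Setting $n_{\alpha}:=T(m_{\alpha})\in A$, continuity of $T$ then yields, for each $a\in A$,
$$a n_{\alpha}-\phi(a)n_{\alpha}=T(a\cdot m_{\alpha})-T(m_{\alpha}\cdot a)=T(a\cdot m_{\alpha}-m_{\alpha}\cdot a)\longrightarrow 0.$$

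For the normalisation I would note that $\phi\circ T$ and $\phi\circ\pi_{A}$ agree on elementary tensors, since $\phi(T(a\otimes b))=\phi(a)\phi(b)=\phi(ab)=\phi(\pi_{A}(a\otimes b))$, hence $\phi\circ T=\phi\circ\pi_{A}$ and $\phi(n_{\alpha})=\phi(\pi_{A}(m_{\alpha}))$. Fixing $a_{0}\in A$ with $\phi(a_{0})=1$ (possible since $\phi\neq 0$), multiplicativity and continuity of $\phi$ together with $\pi_{A}(m_{\alpha})a_{0}\to a_{0}$ give $\phi(\pi_{A}(m_{\alpha}))=\phi\bigl(\pi_{A}(m_{\alpha})a_{0}\bigr)\longrightarrow\phi(a_{0})=1$. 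Thus $(n_{\alpha})$ is a net in $A$ with $an_{\alpha}-\phi(a)n_{\alpha}\to 0$ and $\phi(n_{\alpha})\to 1$, so $A$ is approximately left $\phi$-amenable.

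I do not expect a genuine obstacle once Proposition~\ref{strong give psudo} is available; the only point requiring attention is orienting the two one-sided identities for $T$ correctly, so that it is left multiplication — not right multiplication — that is being approximated (whence $T(a\otimes b)=\phi(b)a$ rather than $\phi(a)b$). If one wished to bypass Proposition~\ref{strong give psudo}, one could instead apply $T^{**}$ directly to the bidual net, observe that $aT^{**}(m_{\alpha})-\phi(a)T^{**}(m_{\alpha})\to 0$ and $\langle T^{**}(m_{\alpha}),\phi\rangle=\langle\pi_{A}^{**}(m_{\alpha}),\phi\rangle\to 1$, and then rerun the Goldstine–Mazur descent of Proposition~\ref{strong give psudo}; but routing through pseudo-amenability is the shorter path.
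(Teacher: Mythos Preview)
Your proposal is correct and follows essentially the same route as the paper: reduce to pseudo-amenability via Proposition~\ref{strong give psudo}, push the net through the bounded linear map $T\colon A\otimes_{p}A\to A$, $T(a\otimes b)=\phi(b)a$, and read off both conditions for approximate left $\phi$-amenability. Your write-up simply fills in the details (the module identities for $T$ and the use of an $a_{0}$ with $\phi(a_{0})=1$) that the paper leaves to the reader.
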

\begin{proof}
By Proposition \ref{strong give psudo} strong pseudo-amenability of $A$ implies that $A$ is pseudo-amenable. So there exists a net $(m_{\alpha})$ in $A\otimes_{p}A$ such that $a\cdot m_{\alpha}-m_{\alpha}\cdot a\rightarrow 0$ and $\pi_{A}(m_{\alpha})a\rightarrow a,$ for every $a\in A.$ Define $T:A\otimes_{p}A\rightarrow A$ by $T(a\otimes b)=\phi(b)a$ for every $a,b\in A.$ Clearly $T$ is a bounded linear map. Set 
$n_{\alpha}=T(m_{\alpha})$. One can easily see that $$an_{\alpha}-\phi(a)n_{\alpha}\rightarrow 0,\quad \phi(n_{\alpha})=\phi(T(m_{\alpha}))
=\phi(\pi_{A}(m_{\alpha}))\rightarrow 1,\qquad (a\in A).$$ Then $A$ is approximately left $\phi$-amenable.
\end{proof}
A Banach algebra $A$ is called pseudo-contractible if there exists a net $(m_{\alpha})$ in $A\otimes_{p}A$ such that $a\cdot m_{\alpha}=m_{\alpha}\cdot a$ and $\pi_{A}(m_{\alpha})a\rightarrow a $ for each $a\in A,$ see\cite{ghah pse}.
\begin{Lemma}\label{pseudo-con}
Let $A$ be a  pseudo-contractible Banach algebra. Then $A$ is strong pseudo-amenable.
\end{Lemma}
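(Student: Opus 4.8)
The plan is to start from a pseudo-contractibility net and push it into the bidual in the obvious way, then check that the two required limits survive. Concretely, suppose $(m_\alpha)$ is a net in $A\otimes_p A$ with $a\cdot m_\alpha = m_\alpha\cdot a$ and $\pi_A(m_\alpha)a\to a$ for every $a\in A$. Let $\iota\colon A\otimes_p A\hookrightarrow (A\otimes_p A)^{**}$ be the canonical embedding and set $M_\alpha=\iota(m_\alpha)$. Since $\iota$ is an $A$-bimodule morphism, the identity $a\cdot m_\alpha=m_\alpha\cdot a$ immediately gives $a\cdot M_\alpha = M_\alpha\cdot a$ in $(A\otimes_p A)^{**}$, so in particular $a\cdot M_\alpha - M_\alpha\cdot a = 0 \to 0$, which is the first condition (and is even stronger than required).

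Next I would address the second condition, $a\,\pi_A^{**}(M_\alpha)=\pi_A^{**}(M_\alpha)a\to a$. The key point is the compatibility of $\pi_A^{**}$ with the canonical embeddings: the diagram relating $\pi_A\colon A\otimes_p A\to A$ and its bitranspose $\pi_A^{**}\colon (A\otimes_p A)^{**}\to A^{**}$ through the two inclusions $\iota_{A\otimes_p A}$ and $\iota_A$ commutes, i.e. $\pi_A^{**}\circ\iota_{A\otimes_p A} = \iota_A\circ\pi_A$. Hence $\pi_A^{**}(M_\alpha)=\iota_A(\pi_A(m_\alpha))$ sits inside the canonical copy of $A$ in $A^{**}$. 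Now for $a\in A$, the module actions of $A$ on $A^{**}$ restrict on $\iota_A(A)$ to the original product, so $a\,\pi_A^{**}(M_\alpha)=\iota_A(a\,\pi_A(m_\alpha))$ and $\pi_A^{**}(M_\alpha)\,a=\iota_A(\pi_A(m_\alpha)\,a)$; these coincide because $\pi_A(m_\alpha)a = \pi_A(m_\alpha\cdot a)=\pi_A(a\cdot m_\alpha)=a\pi_A(m_\alpha)$ already in $A$. Finally $\iota_A$ is an isometry, so $\pi_A(m_\alpha)a\to a$ in $A$ yields $\iota_A(\pi_A(m_\alpha)a)\to\iota_A(a)$, i.e. $\pi_A^{**}(M_\alpha)a\to a$ in $A^{**}$. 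Thus $(M_\alpha)$ witnesses strong pseudo-amenability.

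The routine part is just bookkeeping with the canonical embeddings and the functoriality of taking biduals; there is no real analytic obstacle here, since we never leave the canonical copies of $A$ and $A\otimes_p A$ inside their biduals. The only thing to be careful about is that convergence of $\pi_A(m_\alpha)a - a$ to $0$ in the norm of $A$ really does transfer to norm convergence in $A^{**}$ (which it does, $\iota_A$ being isometric) and that the module-action identities one needs are exactly the ones encoded in the commuting square $\pi_A^{**}\circ\iota_{A\otimes_p A}=\iota_A\circ\pi_A$ together with the fact that $\iota_A$ is an $A$-bimodule map. I expect the write-up to be short: state the square, substitute, and invoke that $\iota_A$ is an isometric bimodule morphism.
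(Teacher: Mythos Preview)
Your argument is correct and is exactly the routine verification the paper has in mind; the paper's own proof is simply ``Clear.'' You have merely (and correctly) unpacked why it is clear: the canonical embedding $\iota$ is an isometric $A$-bimodule map with $\pi_A^{**}\circ\iota_{A\otimes_p A}=\iota_A\circ\pi_A$, so the pseudo-contractibility net, viewed in the bidual, already witnesses strong pseudo-amenability.
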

\begin{proof}
Clear.
\end{proof}
\begin{Lemma}
Let $A$ be a  commutative pseudo-amenable  Banach algebra. Then $A$ is strong pseudo-amenable.
\end{Lemma}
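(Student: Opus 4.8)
The plan is to start from a (not necessarily bounded) net $(m_\alpha)$ in $A\otimes_p A$ witnessing pseudo-amenability, so that $a\cdot m_\alpha-m_\alpha\cdot a\to 0$ and $\pi_A(m_\alpha)a\to a$ for every $a\in A$, and to promote it to a net in $(A\otimes_p A)^{**}$ satisfying the two requirements in the definition of strong pseudo-amenability. Since $A$ is commutative, the flip map $\tau$ on $A\otimes_p A$ is an isometric bimodule isomorphism (it swaps the left and right actions), and $\pi_A\circ\tau=\pi_A$. I would consider the symmetrised net $n_\alpha=\tfrac12(m_\alpha+\tau(m_\alpha))$, which still satisfies $\pi_A(n_\alpha)a\to a$ and $a\cdot n_\alpha-n_\alpha\cdot a\to 0$; the point of symmetrising is that $a\cdot n_\alpha - n_\alpha\cdot a = \tfrac12\bigl((a\cdot m_\alpha-m_\alpha\cdot a) - \tau(a\cdot m_\alpha - m_\alpha\cdot a)\bigr)$, so it is still norm-null, and $n_\alpha$ is ``formally symmetric''.

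The key observation is that commutativity forces $\pi_A$ to be a homomorphism onto the commutative algebra $A$, and more importantly that for a commutative $A$ the module $A\otimes_p A$ has the property that $\pi_A(m)a = a\pi_A(m)$ automatically, since $\pi_A(m)\in A$ and $A$ is commutative. Thus the condition $a\pi_A^{**}(m_\alpha)=\pi_A^{**}(m_\alpha)a$ in the definition is vacuous once we land in $A^{**}$, which is itself commutative (Arens regularity is not needed here: $\pi_A^{**}(m_\alpha)\in A\subseteq A^{**}$ because $\pi_A(m_\alpha)\in A$, and $A$ is commutative, so $a\cdot\pi_A(m_\alpha)=\pi_A(m_\alpha)\cdot a$ literally in $A$). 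Hence I would simply take the net $(m_\alpha)$ (or its symmetrisation, if convenient) and view it as a net in $(A\otimes_p A)^{**}$ via the canonical embedding $\iota$. Because $\iota$ is an $A$-bimodule isometry and $\pi_A^{**}\circ\iota=\iota_A\circ\pi_A$, the relations $a\cdot m_\alpha-m_\alpha\cdot a\to 0$ and $\pi_A(m_\alpha)a\to a$ transport verbatim to $a\cdot\iota(m_\alpha)-\iota(m_\alpha)\cdot a\to 0$ and $\pi_A^{**}(\iota(m_\alpha))a\to a$ in $A^{**}$.

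Putting this together: by commutativity $a\pi_A^{**}(\iota(m_\alpha))=\pi_A^{**}(\iota(m_\alpha))a$ for all $\alpha$ and all $a$, and by the pseudo-amenability net this common value converges to $a$; simultaneously $a\cdot\iota(m_\alpha)-\iota(m_\alpha)\cdot a\to 0$. So $(\iota(m_\alpha))_\alpha$ is the desired net in $(A\otimes_p A)^{**}$ and $A$ is strong pseudo-amenable. The only genuinely substantive point — and the part I would write most carefully — is verifying the intertwining identities $a\cdot\iota(m)=\iota(a\cdot m)$, $\iota(m)\cdot a=\iota(m\cdot a)$ and $\pi_A^{**}(\iota(m))=\iota_A(\pi_A(m))$ for the canonical embeddings, which follow from the definition of the second dual actions and the $w^*$-$w^*$ continuity of $\pi_A^{**}$; everything else is bookkeeping. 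I do not expect a serious obstacle here, since the symmetry condition collapses precisely because of commutativity; the ``hard part'' is really just making sure the canonical-embedding compatibilities are stated correctly.
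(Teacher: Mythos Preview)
Your argument is correct and is exactly the unpacking of the paper's one-word proof (``Clear''): embed the pseudo-amenability net via the canonical map $\iota$ into $(A\otimes_p A)^{**}$ and note that, since $\pi_A^{**}(\iota(m_\alpha))=\iota_A(\pi_A(m_\alpha))$ lies in the commutative algebra $A$, the centrality condition $a\pi_A^{**}(m_\alpha)=\pi_A^{**}(m_\alpha)a$ is automatic. The symmetrisation via the flip $\tau$ is an unnecessary detour---as you yourself observe, the original net already works---so you can safely drop that paragraph.
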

\begin{proof}
Clear.
\end{proof}
A Banach algebra $A$ is called biflat if there exists a bounded $A$-bimodule morphism $\rho:A\rightarrow (A\otimes_{p}A)^{**}$ such that $\pi_{A}^{**}\circ \rho(a)=a $ for each $a\in A.$ See \cite{run}.
\begin{Lemma}\label{biflat}
Let $A$ be a  biflat  Banach algebra with a central approximate identity. Then $A$ is strong pseudo-amenable.
\end{Lemma}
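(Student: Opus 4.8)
The plan is to combine the biflat bimodule morphism $\rho\colon A\to(A\otimes_p A)^{**}$ with a central approximate identity $(e_i)$ of $A$, and to produce the required net in $(A\otimes_p A)^{**}$ by evaluating $\rho$ on the $e_i$. Concretely, I would set $m_i=\rho(e_i)\in(A\otimes_p A)^{**}$ and check the two defining conditions of strong pseudo-amenability directly. For the first condition, since $\rho$ is an $A$-bimodule morphism we have $a\cdot m_i-m_i\cdot a=\rho(a e_i)-\rho(e_i a)=\rho(ae_i-e_ia)$, and centrality of $(e_i)$ gives $ae_i-e_ia\to 0$, so boundedness of $\rho$ yields $a\cdot m_i-m_i\cdot a\to 0$ for every $a\in A$. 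For the second condition, $\pi_A^{**}$ is an $A$-bimodule morphism, so $\pi_A^{**}(m_i)$ is central in $A^{**}$ relative to the $A$-action in the sense that $a\,\pi_A^{**}(m_i)=\pi_A^{**}(a\cdot m_i)=\pi_A^{**}(m_i\cdot a)=\pi_A^{**}(m_i)\,a$; here one uses that $\rho$ being a bimodule map forces $a\cdot\rho(e_i)=\rho(ae_i)$ and $\rho(e_i)\cdot a=\rho(e_ia)$ together with the commutation of $\pi_A^{**}$ with the module actions. Finally $\pi_A^{**}(m_i)a=\pi_A^{**}(\rho(e_i))a=e_i a\to a$ by the biflatness identity $\pi_A^{**}\circ\rho=\mathrm{id}_A$ and the approximate-identity property.

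The key steps, in order: (1) record that $\rho$ is bounded and $A$-bimodule linear and $\pi_A^{**}\circ\rho=\mathrm{id}_A$; (2) fix a central approximate identity $(e_i)$ and set $m_i=\rho(e_i)$; (3) verify $a\cdot m_i-m_i\cdot a\to0$ using bimodule-linearity of $\rho$ and $ae_i-e_ia\to0$; (4) verify $a\,\pi_A^{**}(m_i)=\pi_A^{**}(m_i)\,a$ using that $\pi_A^{**}$ intertwines the module actions; (5) verify $\pi_A^{**}(m_i)a=e_ia\to a$. Then $(m_i)$ is the desired net and $A$ is strong pseudo-amenable.

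The only mildly delicate point — and the step I expect to be the main obstacle — is step (4), the exact equality $a\,\pi_A^{**}(m_i)=\pi_A^{**}(m_i)\,a$ as opposed to a mere approximate equality. This requires being careful that $\pi_A^{**}$ really is an $A$-bimodule homomorphism for the natural bimodule structures (it is, being the bitranspose of the bimodule map $\pi_A$, and the first Arens-type module actions on $(A\otimes_p A)^{**}$ restrict the left/right $A$-actions correctly), so that $a\cdot m_i$ and $m_i\cdot a$ are genuine equal-valued module translates before we even take limits: the equality $a\cdot\rho(e_i)=\rho(ae_i)=\rho(e_ia)\cdot$ fails in general, but what we need is $\pi_A^{**}(a\cdot\rho(e_i))=a\,\pi_A^{**}(\rho(e_i))=a e_i$ and $\pi_A^{**}(\rho(e_i)\cdot a)=\pi_A^{**}(\rho(e_i))\,a=e_i a$, and since $\pi_A^{**}(\rho(e_i))=e_i$ is a genuine element of $A$, the identity $a\,\pi_A^{**}(m_i)=\pi_A^{**}(m_i)\,a$ is literally $ae_i=e_ia$ in $A^{**}$ — which need not hold exactly. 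Hence one should read the definition's ``$a\pi_A^{**}(m_\alpha)=\pi_A^{**}(m_\alpha)a$'' as part of the convergence requirement ($ae_i-e_ia\to0$, which holds by centrality of the approximate identity) rather than as an exact identity for each index; alternatively, if one insists on exact equality one replaces $(e_i)$ by a central approximate identity consisting of genuinely central elements when available. Everything else is a routine continuity and limit argument.
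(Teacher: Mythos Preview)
Your approach is exactly the paper's: set $m_\alpha=\rho(e_\alpha)$ and read off the two conditions from the bimodule property of $\rho$ together with $\pi_A^{**}\circ\rho=\mathrm{id}_A$. The only discrepancy is terminological, and it is what produces your worry about step~(4). In this paper (and in the standard usage going back to Ghahramani--Zhang) a \emph{central approximate identity} is a net $(e_\alpha)$ with each $e_\alpha$ lying in the centre $Z(A)$, i.e.\ $ae_\alpha=e_\alpha a$ holds \emph{exactly} for every $a\in A$ and every $\alpha$; it is not merely the asymptotic condition $ae_\alpha-e_\alpha a\to 0$. With that reading there is no obstacle at all: since $\pi_A^{**}(m_\alpha)=\pi_A^{**}\rho(e_\alpha)=e_\alpha\in Z(A)$, the equality $a\,\pi_A^{**}(m_\alpha)=\pi_A^{**}(m_\alpha)\,a$ is literally $ae_\alpha=e_\alpha a$, which holds on the nose. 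Likewise $a\cdot m_\alpha=\rho(ae_\alpha)=\rho(e_\alpha a)=m_\alpha\cdot a$ is an exact equality, not just a limit, so your step~(3) can be strengthened. Your suggestion to reinterpret the clause ``$a\pi_A^{**}(m_\alpha)=\pi_A^{**}(m_\alpha)a$'' in the definition as a convergence statement is therefore unnecessary (and would in fact change the notion); the paper's proof and yours both deliver the exact equality once ``central'' is read correctly.
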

\begin{proof}
Since $A$ is biflat,  there exists a bounded $A$-bimodule morphism $\rho:A\rightarrow (A\otimes_{p}A)^{**}$ such that $\pi_{A}^{**}\circ \rho(a)=a $ for each $a\in A.$ Let $(e_{\alpha})$ be a central approximate identity for $A$. Define $m_{\alpha}=\rho(e_{\alpha})$. Since $\rho $ is a bounded $A$-bimodule morphism,  we have $a\cdot m_{\alpha}=m_{\alpha}\cdot a$ and $\pi^{**}_{A}(m_{\alpha})a=a\pi^{**}_{A}(m_{\alpha})\rightarrow a,$ for every $a\in A.$
\end{proof}
\begin{Remark}
In the previous lemma we can replace the biflatness with the existence of a (not necessarily bounded net) of $A$-bimodule morphism $\rho_{\alpha}:A\rightarrow (A\otimes_{p}A)^{**}$ which satisfies $\pi_{A}\circ\rho_{\alpha}(a)\xrightarrow{||\cdot||}a$. Now using the similar argument as in the proof of previous and iterated limit theorem \cite[p. 69]{kel}, we can see that $A$ is strong pseudo-amenable.
\end{Remark}
\begin{Proposition}\label{epi}
Suppose that $A$ and $B$ are Banach algebras. Let $A$ be strong pseudo-amenable.  If $T:A\rightarrow B$  is a continuous epimorphism, then $B$ is strong pseudo-amenable.
\end{Proposition}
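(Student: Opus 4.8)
The plan is to push the defining net of $A$ forward along the map $T$. First I would consider $T\otimes T\colon A\otimes_{p}A\to B\otimes_{p}B$, the continuous algebra homomorphism determined by $(T\otimes T)(a\otimes b)=T(a)\otimes T(b)$, which satisfies $\|T\otimes T\|\le\|T\|^{2}$. Regard $B\otimes_{p}B$ as an $A$-bimodule through $T$, that is, $a\cdot(c\otimes d)=T(a)c\otimes d$ and $(c\otimes d)\cdot a=c\otimes dT(a)$ for $a\in A$ and $c,d\in B$; then $T\otimes T$ becomes an $A$-bimodule morphism, and moreover $\pi_{B}\circ(T\otimes T)=T\circ\pi_{A}$. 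These facts, together with the observation that $T$ itself is an $A$-bimodule morphism once $B$ carries the $A$-bimodule structure induced by $T$, are all that is needed at the first level.

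Next I would pass to second adjoints. From $\pi_{B}\circ(T\otimes T)=T\circ\pi_{A}$ and $(g\circ f)^{**}=g^{**}\circ f^{**}$ one gets $\pi_{B}^{**}\circ(T\otimes T)^{**}=T^{**}\circ\pi_{A}^{**}$, while $(T\otimes T)^{**}\colon(A\otimes_{p}A)^{**}\to(B\otimes_{p}B)^{**}$ and $T^{**}\colon A^{**}\to B^{**}$ are bounded, extend the original maps, and — by the usual adjoint computation applied to elements of $A$ acting on the relevant dual modules — remain $A$-bimodule morphisms:
$$(T\otimes T)^{**}(a\cdot m-m\cdot a)=T(a)\cdot(T\otimes T)^{**}(m)-(T\otimes T)^{**}(m)\cdot T(a),\qquad T^{**}(a\cdot\Phi)=T(a)\cdot T^{**}(\Phi)$$
for $a\in A$, $m\in(A\otimes_{p}A)^{**}$, $\Phi\in A^{**}$, and similarly for the right actions.

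Now let $(m_{\alpha})$ be a net in $(A\otimes_{p}A)^{**}$ witnessing strong pseudo-amenability of $A$, and put $n_{\alpha}=(T\otimes T)^{**}(m_{\alpha})\in(B\otimes_{p}B)^{**}$. Given $b\in B$, use surjectivity of $T$ to write $b=T(a)$ with $a\in A$. Then
$$b\cdot n_{\alpha}-n_{\alpha}\cdot b=(T\otimes T)^{**}\bigl(a\cdot m_{\alpha}-m_{\alpha}\cdot a\bigr)\longrightarrow 0$$
since $(T\otimes T)^{**}$ is bounded, and using $\pi_{B}^{**}(n_{\alpha})=T^{**}\bigl(\pi_{A}^{**}(m_{\alpha})\bigr)$ together with the module identities,
$$b\,\pi_{B}^{**}(n_{\alpha})=T(a)\cdot T^{**}\bigl(\pi_{A}^{**}(m_{\alpha})\bigr)=T^{**}\bigl(a\,\pi_{A}^{**}(m_{\alpha})\bigr)=T^{**}\bigl(\pi_{A}^{**}(m_{\alpha})\,a\bigr)=\pi_{B}^{**}(n_{\alpha})\,b,$$
so this common value converges to $T^{**}(a)=T(a)=b$. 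Hence $(n_{\alpha})$ witnesses strong pseudo-amenability of $B$. The identities $\pi_{B}\circ(T\otimes T)=T\circ\pi_{A}$ and the bimodule property of $T\otimes T$ are routine; the only point that needs a little care is the behaviour under second adjoints, i.e. confirming that $(T\otimes T)^{**}$ and $T^{**}$ still intertwine the module actions and the product maps. This is a short adjoint computation (no Arens products enter, since only elements of $A$ — not of $A^{**}$ — act), and I expect it to be the main, though mild, obstacle.
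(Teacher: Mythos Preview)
Your proposal is correct and follows essentially the same route as the paper: push the net $(m_\alpha)$ forward via $(T\otimes T)^{**}$, use the $A$-bimodule property of $(T\otimes T)^{**}$ to handle the commutator condition, and use the identity $\pi_B^{**}\circ(T\otimes T)^{**}=T^{**}\circ\pi_A^{**}$ together with surjectivity of $T$ to verify the centrality and approximate-identity conditions. Your remark that only elements of $A$ (not of $A^{**}$) act, so that the second-adjoint bimodule identities are immediate, is a helpful clarification that the paper leaves implicit.
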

\begin{proof}
Since $A$ is strong pseudo-amenable, there exists a net $(m_{\alpha})$ in $(A\otimes_{p}A)^{**}$  such that $a\cdot m_{\alpha}-m_{\alpha}\cdot a\rightarrow 0$ and $\pi^{**}_{A}(m_{\alpha})a=a\pi^{**}_{A}(m_{\alpha})\rightarrow a,$ for every $a\in A.$
Define $T\otimes T:A\otimes_{p}A\rightarrow B\otimes_{p}B$ by $T\otimes T(a\otimes b)=T(a)\otimes T(b)$ for every $a,b\in A.$ Clearly $T\otimes T$ is a bounded linear map.   So we have
$$T(a)\cdot(T\otimes T)^{**}(m_{\alpha})-(T\otimes
T)^{**}(m_{\alpha})\cdot T(a)=(T\otimes T)^{**}(a\cdot
m_{\alpha}-m_{\alpha}\cdot a)\rightarrow 0,\quad (a\in A).
$$
and 
\begin{equation}
\begin{split}
&\pi^{**}_{B}\circ (T\otimes
T)^{**}(m_{\alpha})T(a)-T(a)\pi^{**}_{B}\circ (T\otimes
T)^{**}(m_{\alpha})\\
&=(\pi_{B}\circ (T\otimes
T))^{**}(m_{\alpha}\cdot
a)-(\pi_{B}\circ (T\otimes
T))^{**}(a\cdot m_{\alpha})\\
&=T^{**}\circ\pi^{**}_{A}(m_{\alpha}\cdot a)-T^{**}\circ\pi^{**}_{A}(a\cdot m_{\alpha})\\
&=T^{**}(\pi^{**}_{A}(m_{\alpha})a-a\pi^{**}_{A}(m_{\alpha}))=T^{**}(0)=0
\end{split}
\end{equation}
Also $$\pi^{**}_{B}\circ (T\otimes
T)^{**}(m_{\alpha})T(a)-T(a)=(\pi_{B}\circ (T\otimes
T))^{**}(m_{\alpha}\cdot
a)-T(a)=T^{**}(\pi^{**}_{A}(m_{\alpha})a-a)\rightarrow 0,$$ for
every $a\in A.$ Then $B$ is strong pseudo-amenable.
\end{proof}
\begin{cor}
Let $A$ be a Banach algbera and $I$ be a closed ideal of $A.$ If $A$ is strong pseudo-amenable, then $\frac{A}{I}$ is strong pseudo-amenable.
\end{cor}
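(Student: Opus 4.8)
The plan is to exhibit $A/I$ as the continuous epimorphic image of $A$ and then quote Proposition~\ref{epi}. The map to use is the canonical quotient homomorphism $q\colon A\to A/I$ given by $q(a)=a+I$.

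First I would recall that since $I$ is a \emph{closed} ideal, $A/I$ equipped with the quotient norm $\|a+I\|=\inf_{x\in I}\|a+x\|$ is itself a Banach algebra; the closedness of $I$ is precisely what guarantees that the quotient norm is a genuine norm rather than merely a seminorm, so this is where the hypothesis on $I$ enters. Next I would observe that $q$ is an algebra homomorphism, because $q(ab)=ab+I=(a+I)(b+I)=q(a)q(b)$, that it is surjective by construction, and that it is contractive, since $\|q(a)\|\le\|a\|$, hence continuous. Therefore $q\colon A\to A/I$ is a continuous epimorphism.

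Finally, I would apply Proposition~\ref{epi} with $B=A/I$ and $T=q$: since $A$ is strong pseudo-amenable and $q$ is a continuous epimorphism, $A/I$ is strong pseudo-amenable. There is no real obstacle here, as the entire content of the corollary is the observation that passing to a quotient by a closed ideal is a particular instance of applying a continuous epimorphism, and the substantive argument was already carried out in the proof of Proposition~\ref{epi}.
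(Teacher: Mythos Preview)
Your proof is correct and follows exactly the same approach as the paper: the paper's own proof simply notes that the quotient map is a bounded epimorphism from $A$ onto $A/I$ and applies Proposition~\ref{epi}. Your version merely supplies the routine verifications (that the quotient norm is a norm, that $q$ is multiplicative, surjective, and contractive), so there is nothing to add.
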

\begin{proof}
The quotient map is  a bounded epimorphism from $A$ onto
$\frac{A}{I}$, now apply   previous proposition.
\end{proof}
\begin{Lemma}\label{idempotent}
Let $A$ and $B$ be Banach algebras. Suppose that $B$ has a non-zero idempotent. If $A\otimes_{p}B$ is strong pseudo-amenable, then $A$ is strong pseudo-amenable.
\end{Lemma}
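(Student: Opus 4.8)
The plan is to cut $A\otimes_{p}B$ down by a fixed non-zero idempotent $e\in B$ and push the defining net forward to $A$. Write $C:=A\otimes_{p}B$. Note that $A$ is not naturally a quotient of $C$, so Proposition \ref{epi} does not apply directly and an explicit construction is needed.

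First I would choose, by Hahn--Banach, a functional $\psi\in B^{*}$ with $\psi(e)=1$, and set $\psi_{e}\in B^{*}$, $\psi_{e}(b):=\psi(ebe)$. Since $e^{2}=e$ one checks at once that $\psi_{e}(e)=1$ and $\psi_{e}(eb)=\psi_{e}(be)=\psi_{e}(b)$ for all $b\in B$; these invariance properties are what make the argument go. Define the bounded linear maps $S:=\operatorname{id}_{A}\otimes\psi_{e}:C\to A$ and, after identifying $C\otimes_{p}C$ isometrically with $(A\otimes_{p}A)\otimes_{p}(B\otimes_{p}B)$ by flipping the two middle factors, $T:=\operatorname{id}_{A\otimes_{p}A}\otimes(\psi_{e}\circ\pi_{B}):(A\otimes_{p}A)\otimes_{p}(B\otimes_{p}B)\to A\otimes_{p}A$. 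Under this identification $\pi_{C}$ becomes $\pi_{A}\otimes\pi_{B}$, and the $C$-bimodule action of an element $a\otimes e$ acts as left/right multiplication by $a$ on the appropriate $A$-factor and by $e$ on the corresponding $B$-factor. Using the invariance of $\psi_{e}$ one verifies, first on elementary tensors and then by linearity and continuity,
$$T\big((a\otimes e)\cdot z\big)=a\cdot T(z),\quad T\big(z\cdot(a\otimes e)\big)=T(z)\cdot a,\quad \pi_{A}\circ T=S\circ\pi_{C},$$
$$S\big((a\otimes e)\cdot y\big)=a\cdot S(y),\quad S\big(y\cdot(a\otimes e)\big)=S(y)\cdot a,\quad S(a\otimes e)=a,$$
for all $a\in A$, $y\in C$, $z\in C\otimes_{p}C$.

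Now suppose $C$ is strong pseudo-amenable with witnessing net $(M_{\alpha})$ in $(C\otimes_{p}C)^{**}$, and set $m_{\alpha}:=T^{**}(M_{\alpha})\in(A\otimes_{p}A)^{**}$. Recalling that the module action on a bidual is the second adjoint of the corresponding one-sided multiplication operator and that bounded maps and their second adjoints are (norm) continuous, the first display gives $a\cdot m_{\alpha}-m_{\alpha}\cdot a=T^{**}\big((a\otimes e)\cdot M_{\alpha}-M_{\alpha}\cdot(a\otimes e)\big)\to0$, as well as $\pi_{A}^{**}(m_{\alpha})=S^{**}\big(\pi_{C}^{**}(M_{\alpha})\big)$. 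Writing $X_{\alpha}:=\pi_{C}^{**}(M_{\alpha})$, the hypothesis yields $(a\otimes e)\cdot X_{\alpha}=X_{\alpha}\cdot(a\otimes e)\to a\otimes e$ in $C^{**}$; applying $S^{**}$ and using the identities in the second display (together with $S^{**}(a\otimes e)=S(a\otimes e)=a$) gives $a\,\pi_{A}^{**}(m_{\alpha})=\pi_{A}^{**}(m_{\alpha})\,a\to a$ for every $a\in A$. Hence $(m_{\alpha})$ witnesses strong pseudo-amenability of $A$.

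The main point to be careful about is the bookkeeping behind the two displays: one must check that the isometric identification $C\otimes_{p}C\cong(A\otimes_{p}A)\otimes_{p}(B\otimes_{p}B)$ carries the $C$-bimodule structure and the map $\pi_{C}$ to exactly the maps claimed, and that each of the four facts $\psi_{e}(eb)=\psi_{e}(be)=\psi_{e}(b)$ and $\psi_{e}(e)=1$ is genuinely needed (for the left action, the right action, the product identity, and the normalization $S(a\otimes e)=a$, respectively) -- this is precisely why one passes to $\psi_{e}$ rather than working with a functional merely satisfying $\psi(e)=1$. Granting this, the remainder is a routine continuity argument.
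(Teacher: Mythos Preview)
Your proof is correct and follows essentially the same approach as the paper's: both construct a bounded ``slice'' map $T$ from $(A\otimes_{p}B)\otimes_{p}(A\otimes_{p}B)$ to $A\otimes_{p}A$ using a functional on $B$ that is suitably invariant under multiplication by the idempotent, and then push the witnessing net forward through $T^{**}$. The paper's map is $a_{1}\otimes b_{1}\otimes a_{2}\otimes b_{2}\mapsto f(b_{0}b_{1}b_{2})\,a_{1}\otimes a_{2}$ with $f(bb_{0})=f(b_{0}b)$ and $f(b_{0})=1$, while yours uses $\psi_{e}(b_{1}b_{2})$ with $\psi_{e}(b)=\psi(ebe)$; these are minor variants of the same construction. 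Your write-up is more explicit in isolating the auxiliary map $S$ and the intertwining relation $\pi_{A}\circ T=S\circ\pi_{C}$, which the paper leaves implicit in the phrase ``one can readily see''.
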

\begin{proof}
It deduces from a small modification of the argument of \cite[Proposition 3.5]{joh1}. In fact suppose that $b_{0}$ is a non-zero idempotent  of $B.$ Using Hahn-Banach theorem there exists a bounded linear map $f\in B^{*}$ such that $f(bb_{0})=f(b_{0}b)$ and $ f(b_{0})=1$, for every $b\in B.$ Define $T_{b_{0}}:A\otimes_{p}B\otimes_{p}A\otimes_{p}B\rightarrow A\otimes_{p}A$ by $T(a_{1}\otimes b_{1}\otimes a_{2}\otimes b_{2})=f(b_{0}b_{1}b_{2})a_{1}\otimes a_{2}$ for each $a_{1},a_{2}\in A$ and $b_{1}, b_{2}\in B$. Since $A\otimes_{p}B$ is strong pseudo-amenable, there exists a net $(m_{\alpha})$ in $(A\otimes_{p}B\otimes_{p}A\otimes_{p}B)^{**}$ such that $$x\cdot m_{\alpha}-m_{\alpha}\cdot x\rightarrow 0,\quad \pi^{**}_{A\otimes_{p}B}(m_{\alpha})x=x\pi^{**}_{A\otimes_{p}B}(m_{\alpha})\rightarrow x,\qquad (x\in A\otimes_{p}B).$$ Now one can readily see that 
$$a\cdot T^{**}_{b_{0}}(m_{\alpha})- T^{**}_{b_{0}}(m_{\alpha})a\rightarrow 0,\quad a\pi^{**}_{A} \circ T^{**}_{b_{0}}(m_{\alpha})=\pi^{**}_{A} \circ T^{**}_{b_{0}}(m_{\alpha})a\rightarrow a,$$
for each $a\in A.$ It follows that $A$ is strong pseudo-amenable.
\end{proof}
\section{Strong psudo-amenability of matrix algebras}
Let $A$ be a Banach algebra and $I$ be a totally ordered set. The
set of  $I\times I$ upper triangular matrices, with entries 
from $A$ and  the usual matrix
operations and also finite $\ell^1$-norm, is a Banach algebra and it  denotes with $UP(I,A)$.
\begin{Theorem}\label{main}
Let $I$ be a totally ordered set with smallest element and let $A$ be a Banach algebra with $\phi\in\Delta(A)$.  Then $UP(I,A)$ is strong pseudo-amenable if and
only if $A$  is strong pseudo-amenable and  $|I|=1$.
\end{Theorem}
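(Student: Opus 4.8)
The plan is to prove the two implications separately, the forward one carrying all the content. For the ``if'' direction there is essentially nothing to do: when $|I|=1$ the upper-triangularity condition on a $1\times 1$ matrix is vacuous, so $UP(I,A)$ is (isometrically) isomorphic to $A$ itself, and strong pseudo-amenability of $A$ is then literally strong pseudo-amenability of $UP(I,A)$.

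For the ``only if'' direction I would first extract that $A$ is strong pseudo-amenable. Let $i_0$ be the smallest element of $I$ and consider the corner map $\theta\colon UP(I,A)\to A$, $\theta(x)=x_{i_0i_0}$. It is plainly a norm-decreasing surjective linear map, and it is multiplicative because for upper-triangular $x,y$ the only index contributing to $(xy)_{i_0i_0}=\sum_m x_{i_0m}y_{mi_0}$ is $m=i_0$ (one needs $i_0\le m$ from $x_{i_0m}\ne 0$ and $m\le i_0$ from $y_{mi_0}\ne 0$), so $(xy)_{i_0i_0}=x_{i_0i_0}y_{i_0i_0}$. Thus $\theta$ is a continuous epimorphism, and Proposition~\ref{epi} gives that $A$ is strong pseudo-amenable.

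The heart of the matter is to show $|I|=1$. Suppose instead $|I|\ge 2$, pick $i_0<j$ in $I$, and fix $a_0\in A$ with $\phi(a_0)=1$. The ``diagonal-entry'' functional $\psi_j\colon UP(I,A)\to\mathbb{C}$, $\psi_j(x)=\phi(x_{jj})$, is a character of $UP(I,A)$: the same ``only the middle index survives'' computation gives $(xy)_{jj}=x_{jj}y_{jj}$, so $\psi_j$ is multiplicative; it is bounded by $\|\cdot\|$; and $\psi_j$ applied to the matrix with $a_0$ in position $(j,j)$ and zeros elsewhere equals $\phi(a_0)=1$, so $\psi_j\ne 0$. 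By Corollary~\ref{ap phi}, $UP(I,A)$ is approximately left $\psi_j$-amenable, so there is a net $(n_\alpha)$ in $UP(I,A)$ with $x\cdot n_\alpha-\psi_j(x)n_\alpha\to 0$ for all $x$ and $\psi_j(n_\alpha)\to 1$. Now feed in $x_0$, the matrix with $a_0$ in position $(i_0,j)$ and zeros elsewhere (a legitimate element since $i_0<j$). Since $\psi_j(x_0)=\phi(0)=0$ we get $x_0 n_\alpha\to 0$; a one-line entrywise computation shows that the $(i_0,j)$-entry of $x_0n_\alpha$ is $a_0(n_\alpha)_{jj}$, whence $\|a_0(n_\alpha)_{jj}\|\to 0$. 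Applying $\phi$ and using $\phi(a_0)=1$ gives $|\psi_j(n_\alpha)|=|\phi(a_0(n_\alpha)_{jj})|\to 0$, contradicting $\psi_j(n_\alpha)\to 1$. Hence $|I|=1$.

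The only genuinely delicate point, and the reason the theorem is true, is the choice of character on $UP(I,A)$: one must use $\psi_j$ at a \emph{non-minimal} index $j$, so that some $i_0<j$ exists and the off-diagonal unit sitting in position $(i_0,j)$ is available to ``collapse row $j$''. For the minimal-index character $\psi_{i_0}$ no contradiction is available (indeed $UP(I,A)$ is approximately left $\psi_{i_0}$-amenable whenever $A$ is approximately left $\phi$-amenable), so it is precisely the asymmetry forced by upper-triangularity that makes the argument run. Everything else — that $\theta$ and $\psi_j$ are continuous, and that the entrywise products are as claimed — is routine verification.
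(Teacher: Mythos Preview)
Your argument is correct and follows the paper's overall strategy---pass from strong pseudo-amenability to approximate left $\psi$-amenability via Corollary~\ref{ap phi}, then contradict $\psi(n_\alpha)\to 1$ by testing against a suitable strictly upper-triangular matrix in $\ker\psi$---but with two genuine simplifications. First, you work with the diagonal character $\psi_j$ at a \emph{non-minimal} index $j$ and test against the matrix unit at $(i_0,j)$; the paper instead takes the minimal-index character $\psi_\phi(x)=\phi(x_{i_0i_0})$, which (as your parenthetical remark correctly notes) \emph{is} approximately left amenable on $UP(I,A)$, and therefore first has to push the net into the row-ideal $J=\{x:x_{i,\cdot}=0\ \text{for}\ i\neq i_0\}$ before an off-diagonal test element can produce a contradiction. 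Your choice of $\psi_j$ bypasses this ideal manoeuvre entirely and makes the entrywise computation a single line. Second, you extract strong pseudo-amenability of $A$ up front via the corner epimorphism $x\mapsto x_{i_0i_0}$ and Proposition~\ref{epi}, whereas the paper deduces it only after establishing $|I|=1$; your route is slightly more informative since it shows that this conclusion holds regardless of~$|I|$.
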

\begin{proof}
Let $i_{0}$ be the smallest element and $\phi\in\Delta(A).$ Suppose that  $UP(I,A)$ is strong pseudo-amenable.  Suppose conversely that $|I|>1$. Define $\psi_{\phi}:UP(I,A)\rightarrow \mathbb{C}$ by $\psi((a_{i,j})_{i,j})=\phi(a_{i_{0},i_{0}})$ for every $(a_{i,j})_{i,j}\in UP(I,A)$. Clearly $\psi_{\phi}$ is a character on $ UP(I,A)$. Since $UP(I,A)$ is strong pseudo-amenable, by Corollary \ref{ap phi} $UP(I,A)$  is approximate left $\psi_{\phi}-$amenable. So by \cite{agha} there esists a net $(n_{\alpha})$ in $UP(I,A)$ such that $an_{\alpha}-\psi_{\phi}(a)n_{\alpha}\rightarrow 0$ and $\psi_{\phi}(n_{\alpha})\rightarrow 1$ for every $a\in UP(I,A)$. Set $$J=\{(a_{i,j})\in UP(I,A)|a_{i,j}=0,\quad i\neq i_{0} \}.$$
It is easy to see that $J$ is a closed ideal of $UP(I,A)$ and $\psi_{\phi}|_{J}\neq 0$. So there exists a $j$ in $J$ such that $\psi_{\phi}(j)=1.$ Replacing  $(n_{\alpha})$ with $(n_{\alpha}j)$ we can assume that  $(n_{\alpha})$ is a net in $J$ such that  $an_{\alpha}-\psi_{\phi}(a)n_{\alpha}\rightarrow 0$ and $\psi_{\phi}(n_{\alpha})\rightarrow 1$ for every $a\in J$. Suppose that  $n_{\alpha}$ in $J$ has a form 
$\left(\begin{array}{cccc} a^{\alpha}_{i_{0},i_{0}}&a^{\alpha}_{i_{0},i}&\cdots\\
0&0&\cdots\\
\colon&\cdots&\colon
\end{array}
\right),$ for some nets $( a^{\alpha}_{i_{0},i_{0}})$ and $ (a^{\alpha}_{i_{0},i})$ in $A$. Note that since $|I|>1,$ the matrix $n_{\alpha}$ must has at least two columns.  Also $\psi_{\phi}(n_{\alpha})\rightarrow 1$ implies that $\phi( a^{\alpha}_{i_{0},i_{0}})\rightarrow 1.$ Let $x$ be an element of $A$ such that $\phi(x)=1.$ Set $a=\left(\begin{array}{cccc}0&x&0&\cdots\\
0&0&0&\cdots\\
\colon&\colon&\cdots&\colon
\end{array}
\right)\in J$. Clearly $a\in \ker\psi_{\phi}$. Put $a$ in the following fact
$an_{\alpha}-\psi_{\phi}(a)n_{\alpha}\rightarrow 0$. It  follows that $ a^{\alpha}_{i_{0},i_{0}}x\rightarrow 0$. Application of $\phi $ on  $ a^{\alpha}_{i_{0},i_{0}}x\rightarrow 0$  implies that 
$\phi( a^{\alpha}_{i_{0},i_{0}}x)=\phi( a^{\alpha}_{i_{0},i_{0}})\phi(x)=\phi( a^{\alpha}_{i_{0},i_{0}})\rightarrow 0$, which is impossible. So $|I|=1$ and $UP(I,A)=A$ which implies that $A$ is  strong pseudo-amenable.

Converse is clear.

\end{proof}
 Suppose that $A$ is a Banach algebra and $I$ is a non-empty set. We denote   $M_{I}(A)$  for the Banach algebra of $I\times
 I$-matrices over $A$, with the finite $\ell^{1}$-norm and the
matrix multiplication. This class of Banach algebras belongs to $\ell^{1}$-Munn algebras, see \cite{essl}. 
We also  denote $\varepsilon_{i,j}$ for a matrix belongs to
$M_{I}(\mathbb{C})$ which $(i,j)$-entry is 1 and 0 elsewhere.
The map $\theta:M_{I}(A)\rightarrow A\otimes_{p}
M_{I}(\mathbb{C})$ defined by
$\theta((a_{i,j}))=\sum_{i,j}a_{i,j}\otimes \varepsilon_{i,j}$ is an
isometric algebra isomorphism.
\begin{Theorem}
Let  $I$ be a non-empty set. Then $M_{I}(\mathbb{C})^{**}$ is strong pseudo-amenable if and only if $I$ is finite.
\end{Theorem}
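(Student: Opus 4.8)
I would prove the equivalence by treating the two implications separately, and I expect the substance to lie entirely in the ``only if'' direction, which I would reduce to a statement about bounded approximate identities.

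\emph{Sufficiency.} Suppose $I$ is finite. Then $M_{I}(\mathbb{C})$ is finite dimensional, hence reflexive, and the canonical embedding identifies $M_{I}(\mathbb{C})^{**}$ with $M_{I}(\mathbb{C})$ as Banach algebras (on a finite dimensional algebra both Arens products coincide with the given product). I would then write down the diagonal $m=\frac{1}{|I|}\sum_{i,j}\varepsilon_{i,j}\otimes\varepsilon_{j,i}$ in $M_{I}(\mathbb{C})\otimes_{p}M_{I}(\mathbb{C})$; a direct computation on matrix units gives $\varepsilon_{k,l}\cdot m=m\cdot\varepsilon_{k,l}=\frac{1}{|I|}\sum_{j}\varepsilon_{k,j}\otimes\varepsilon_{j,l}$ for all $k,l$, while $\pi(m)=\sum_{i}\varepsilon_{i,i}$ is the identity of $M_{I}(\mathbb{C})$, so $\pi(m)a=a$ for every $a$. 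Thus the constant net $(m)$ witnesses that $M_{I}(\mathbb{C})$ is pseudo-contractible, and Lemma \ref{pseudo-con} yields that $M_{I}(\mathbb{C})^{**}$ is strong pseudo-amenable. (Equivalently one could note that $M_{I}(\mathbb{C})$ is biflat with central approximate identity $\{\sum_{i}\varepsilon_{i,i}\}$ and invoke Lemma \ref{biflat}.)

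\emph{Necessity.} Suppose $M_{I}(\mathbb{C})^{**}$ is strong pseudo-amenable; I must show $I$ is finite. By Proposition \ref{strong give psudo}, $M_{I}(\mathbb{C})^{**}$ is pseudo-amenable, and hence it has a bounded approximate identity. Next I would use the standard fact that a bounded approximate identity of a bidual descends: given a bounded approximate identity $(u_{\alpha})$ of $M_{I}(\mathbb{C})^{**}$, approximate each $u_{\alpha}$ weak$^{*}$ by a bounded net in $M_{I}(\mathbb{C})$; for $a\in M_{I}(\mathbb{C})$ the products $a u_{\alpha}$ and $u_{\alpha}a$ are then weak$^{*}$ limits of the corresponding products inside $M_{I}(\mathbb{C})$, and since $a u_{\alpha}\to a$ and $u_{\alpha}a\to a$ in norm, a Mazur convexity argument produces a bounded left and a bounded right approximate identity of $M_{I}(\mathbb{C})$, hence a bounded two-sided one. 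So $M_{I}(\mathbb{C})$ itself has a bounded approximate identity.

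\emph{The heart of the matter.} It remains to see that $M_{I}(\mathbb{C})$ admits a bounded approximate identity only if $I$ is finite. Let $(e_{\alpha})$ be one, with $\|e_{\alpha}\|\leq M$. For each $i\in I$ we have $e_{\alpha}\varepsilon_{i,i}\to\varepsilon_{i,i}$ and $\|\varepsilon_{i,i}\|=1$, so there is $\alpha_{i}$ with $\|e_{\alpha}\varepsilon_{i,i}\|>\frac{1}{2}$ for all $\alpha\geq\alpha_{i}$; but $e_{\alpha}\varepsilon_{i,i}$ is exactly the $i$-th column of $e_{\alpha}$, so $\|e_{\alpha}\varepsilon_{i,i}\|$ equals the $\ell^{1}$-norm of that column, and $\|e_{\alpha}\|$ is the sum of the $\ell^{1}$-norms of all columns. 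Were $I$ infinite, one could pick distinct indices $i_{1},\dots,i_{k}$ with $k>2M$ and an $\alpha$ beyond all of $\alpha_{i_{1}},\dots,\alpha_{i_{k}}$, whence $\|e_{\alpha}\|\geq\sum_{r=1}^{k}\|e_{\alpha}\varepsilon_{i_{r},i_{r}}\|>\frac{k}{2}>M$, a contradiction. Hence $I$ is finite, completing the proof. The step I expect to be the main obstacle is the one in the previous paragraph, namely extracting an honest bounded approximate identity of $M_{I}(\mathbb{C})$ from strong pseudo-amenability of the bidual; once that is in hand, the column-norm estimate is routine.
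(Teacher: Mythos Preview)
Your sufficiency argument is fine and matches the paper's.

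The necessity argument has a real gap. You write ``$M_{I}(\mathbb{C})^{**}$ is pseudo-amenable, and hence it has a bounded approximate identity.'' Pseudo-amenability only produces an approximate identity, not a bounded one: the net $(\pi_{A}(m_{\alpha}))$ need not be bounded because the defining net $(m_{\alpha})$ is not. In fact the very paper you are working in supplies the counterexample. By the Remark following Theorem~\ref{main}, $M_{I}(\mathbb{C})$ is pseudo-amenable for every infinite $I$; yet your own column-norm estimate shows $M_{I}(\mathbb{C})$ has no bounded approximate identity when $I$ is infinite. So the inference you rely on is false, and everything downstream (descending the b.a.i.\ to $M_{I}(\mathbb{C})$ and then running the column count) collapses with it.

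What the paper exploits, and what your reduction to mere pseudo-amenability throws away, is the \emph{exact} centrality in the definition of strong pseudo-amenability: $a\,\pi^{**}(m_{\alpha})=\pi^{**}(m_{\alpha})\,a$ for every $a$ and every $\alpha$. Fixing a single index $\alpha$, the element $n_{\alpha}=\pi^{**}_{A^{**}}(m_{\alpha})\in A^{****}$ commutes with all of $A$. Goldstine approximations of that one fixed element give a net in $A^{**}$, and then in $A$, that is bounded by $\|n_{\alpha}\|$ and approximately commutes with $A$ in the weak sense; a Mazur-type convexity step upgrades this as needed. The boundedness here comes for free from approximating a \emph{single} fixed functional, not from any uniform control over the $\alpha$'s. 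With such a bounded, approximately central, nonzero net in $M_{I}(\mathbb{C})$ in hand, one then argues on matrix units that the diagonal entries must all approach a common nonzero limit, which forces $\sum_{j\in I}|l|<\infty$ and hence $|I|<\infty$. Your column-norm idea is close in spirit to this last step, but to get there you must retain and use the centrality condition rather than pass through Proposition~\ref{strong give psudo}.
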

\begin{proof}
Let $A=M_{I}(\mathbb{C}).$ Suppose that $A^{**}$ is strong pseudo-amenable.  There exists a net  $(m_{\alpha})$ in $(A^{**}\otimes_{p}A^{**})^{**}$ such that $a\cdot m_{\alpha}-m_{\alpha}\cdot a\rightarrow 0$ and $\pi^{**}_{A^{**}}(m_{\alpha})a=a\pi^{**}_{A^{**}}(m_{\alpha})\rightarrow a,$ for every $a\in A^{**}.$  So there exists a net $n_{\alpha}=\pi^{**}_{A^{**}}(m_{\alpha})$ in $A^{****}$ such that  $n_{\alpha}a=an_{\alpha}\rightarrow a,$ for every $a\in A.$ Thus for each $\alpha$ we have a net $(m^{\alpha}_{\beta})_{\beta}$ in $A^{**}$ such that $m^{\alpha}_{\beta}\xrightarrow{w^*}n_{\alpha}$ and $||m^{\alpha}_{\beta}||\leq ||n_{\alpha}||$. Then we have 
$$am^{\alpha}_{\beta}-m^{\alpha}_{\beta}a\xrightarrow{w} 0,\quad m^{\alpha}_{\beta}(f)\rightarrow n_{\alpha}(f),$$
where $f\in A^{***}$ such that $n_{\alpha}(f)\neq 0.$ Take $\epsilon>0$ and $F=\{a_{1},a_{2},a_{3},...,a_{r}\}$ an arbitrary subset of $A$ . Define $$V_{\alpha}=\{(a_{1}n-na_{1},a_{2}n-na_{2},...,a_{r}n-na_{r}, n(f)-n_{\alpha}(f))|n\in A^{**},||n||\leq||n_{\alpha}||\},$$ clearly $V_{\alpha}$ is a convex subset of $(\prod^{r}_{i=1}A^{**})\oplus_{1}\mathbb{C}.$  It is easy to see that $(0,0,,...,0)$  belongs to $\overline{V_{\alpha}}^{w}$. Since the norm topology and the weak topology on the convex sets are the same, we can assume that  $(0,0,,...,0)$  belongs to $\overline{V_{\alpha}}^{||\cdot||}$. So there exists an element  $m_{(F,\epsilon)}$ in  $A^{**}$ which $$||a_{i}m_{(F,\epsilon)}-m_{(F,\epsilon)}a_{i}||<\epsilon,\quad | m_{(F,\epsilon)}(f)- n_{\alpha}(f)| <\epsilon,\qquad ||m_{(F,\epsilon)} ||\leq ||n_{\alpha}||,$$
for every $i\in \{1,2,,...,r\}$. It follows that the net $(m_{(F,\epsilon)})_{(F,\epsilon)}$ in $A^{**}$ satisfies 
$$am_{(F,\epsilon)}-m_{(F,\epsilon)}a\xrightarrow{||\cdot||}0,\quad  m_{(F,\epsilon)}(f)\xrightarrow{|\cdot|} n_{\alpha}(f),\qquad ||m_{(F,\epsilon)} ||\leq ||n_{\alpha}||,$$ for every $a\in A.$ 
Since $n_{\alpha}(f)\neq 0$ we may assume that  $m_{(F,\epsilon)}(f)$ stays away from $0$.
 On  the other hand there exists a net $(m^{(F,\epsilon)}_{v})$ in $A$ such that $m^{(F,\epsilon)}_{v}\xrightarrow{w^{*}}m_{(F,\epsilon)}$ and $||m^{(F,\epsilon)}_{v}||\leq ||m_{(F,\epsilon)}||\leq ||n_{\alpha}||.$ So $$am^{(F,\epsilon)}_{v}\xrightarrow{w^{*}}am_{(F,\epsilon)},\quad m^{(F,\epsilon)}_{v}a\xrightarrow{w^{*}}m_{(F,\epsilon)}a,\qquad (a\in A).$$ Since $am_{(F,\epsilon)}-m_{(F,\epsilon)}a\xrightarrow{||\cdot||}0$, we may assume that $am_{(F,\epsilon)}-m_{(F,\epsilon)}a\xrightarrow{w^{*}}0$. It follows that 
\begin{equation}
\begin{split}
&w^{*}-\lim_{(F,\epsilon)}w^{*}-\lim_{v}(am^{(F,\epsilon)}_{v}-m^{(F,\epsilon)}_{v}a)\\
&=w^{*}-\lim_{(F,\epsilon)}w^{*}-\lim_{v}(am^{(F,\epsilon)}_{v}-am_{(F,\epsilon)}+am_{(F,\epsilon)}-m_{(F,\epsilon)}a+m_{(F,\epsilon)}a -m^{(F,\epsilon)}_{v}a)=0
\end{split}
\end{equation}
and $$w^{*}-\lim_{(F,\epsilon)}w^{*}-\lim_{v}m_{v}^{(F,\epsilon)}=n_{\alpha}.$$ 
Now using iterated limit theorem \cite[p. 69]{kel}, we can find a net $(m_{(v,F,\epsilon)})$ in $A$ such that 
$$w^{*}-\lim_{(v,F,\epsilon)}am_{(v,F,\epsilon)}-m_{(v,F,\epsilon)}a=0,\quad w^{*}-\lim_{(v,F,\epsilon)}m_{(v,F,\epsilon)} =n_{\alpha},\quad ||m_{(v, F,\epsilon)}||\leq|| n_{\alpha}||\quad (a\in A).$$ Since  $(m_{(v,F,\epsilon)})$ is a net  in $A$, we have  $am_{(v,F,\epsilon)}-m_{(v,F,\epsilon)}a\xrightarrow{w}0$.  Now we follow the similar arguments as in \cite[Example 4.1(iii)]{sah new amen} to show that $I$ is finite. 

Let $m_{(v,F,\epsilon)}=(y^{i,j}_{(v,F,\epsilon)})$, where
$y^{i,j}_{(v,F,\epsilon)}\in \mathbb{C}$ for every $i,j\in I$. Since the product
of the weak topology on $\mathbb{C}$ coincides with the weak
topology on $A$ \cite[Theorem 4.3]{schae}, for a fixed
$i_{0}\in\Lambda$, we have  $\varepsilon_{i_{0},j}m_{(v,F,\epsilon)}-m_{(v,F,\epsilon)}\varepsilon_{i_{0},j}\xrightarrow{w}0$.
Thus 
$y^{j,j}_{(v,F,\epsilon)}-y^{i_{0},i_{0}}_{(v,F,\epsilon)}\xrightarrow{w}0$ and
$y^{i,j}_{(v,F,\epsilon)}\xrightarrow{w}0$, whenever $i\neq j$. The boundedness of 
$(m_{(v,F,\epsilon)})$, implies that  $(y^{i_{0},i_{0}}_{(v,F,\epsilon)})$ is a
bounded net in $\mathbb{C}$. Thus $(y^{i_{0},i_{0}}_{(v,F,\epsilon)})$ has a
convergence subnet,  denote it again with
$(y^{i_{0},i_{0}}_{(v,F,\epsilon)})$. Suppose that    $(y^{i_{0},i_{0}}_{(v,F,\epsilon)})$ converges to $l$ with
respect to $|\cdot|$. On the other hand 
$y^{j,j}_{(v,F,\epsilon)}-y^{i_{0},i_{0}}_{(v,F,\epsilon)}\xrightarrow{w}0$, implies that 
$y^{j,j}_{(v,F,\epsilon)}-y^{i_{0},i_{0}}_{(v,F,\epsilon)}\xrightarrow{|.|}0$ (because $\mathbb{C}$ is a Hilbert space). Thus 
 $y^{j,j}_{(v,F,\epsilon)}\xrightarrow{|.|}l$ for every  $j\in I.$
We claim that $l\neq 0$. On the contrary suppose that $l=0$. Then  by
\cite[Theorem 4.3]{schae} we have $m_{(v,F,\epsilon)}\xrightarrow{w}0$. Then
$f(m_{(v,F,\epsilon)})\rightarrow 0$. Also we have 
$f(m_{(v,F,\epsilon)})=m_{(v,F,\epsilon)}(f)\rightarrow n_{\alpha}(f)\neq 0$, which reveals a contradiction. Hence  $l$ must be a non-zero number. Therefore the facts 
$y^{j,j}_{(v,F,\epsilon)}-y^{i_{0},i_{0}}_{(v,F,\epsilon)}\xrightarrow{w}0$
and $y^{i,j}_{(v,F,\epsilon)}\xrightarrow{w}0$ injunction with \cite[Theorem 4.3]{schae}, give that $y_{(v,F,\epsilon)}\xrightarrow{w}y_{0}$, where
$y_{0}$ is a matrix with $l$ in the  diagonal position  and $0$
elsewhere. So we have  $y_{0}\in
\overline{\hbox{Conv}(y_{(v,F,\epsilon)})}^{w}=\overline{\hbox{Conv}(y_{(v,F,\epsilon)})}^{||.||}$.
It implies that  $y_{0}\in A$. But $\infty =\sum_{j\in I}|l|= \sum_{j\in
I}|y^{j,j}_{0}|=||y_{0}||<\infty,$ provided that $I$ is
infinite which is a contradiction. So $I$ must be finite.

For converse, let $I$ be finite. Then $M_{I}(\mathbb{C})^{**}=M_{I}(\mathbb{C}^{**})=M_{I}(\mathbb{C})$.  Using \cite[Proposition 2.7]{rams} we know that   $M_{I}(\mathbb{C})$ is biflat with an identity. So Lemma \ref{biflat} implies that $M_{I}(\mathbb{C})$ is strong pseudo-amenable. 
\end{proof}
We can use the similar arguments as in the  previous theorem and shows  the following result. 
\begin{Theorem}\label{main}
Let  $I$ be a non-empty set. Then $M_{I}(\mathbb{C})$ is strong pseudo-amenable if and only if $I$ is finite.
\end{Theorem}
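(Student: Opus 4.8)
The plan is to prove both directions directly; all the content is in the forward implication, where the idea is to convert the hypothesis into a \emph{central} approximate identity for $M_I(\mathbb{C})$ itself, which is impossible once $I$ is infinite.

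Write $A=M_I(\mathbb{C})$ and recall that, as a Banach space, $A$ is isometrically $\ell^1(I\times I)$, hence $A=c_0(I\times I)^*$. First I would unwind the definition of strong pseudo-amenability: a witnessing net $(m_\alpha)$ in $(A\otimes_{p}A)^{**}$ yields $n_\alpha:=\pi_A^{**}(m_\alpha)\in A^{**}$ with $a\cdot n_\alpha=n_\alpha\cdot a$ for all $a\in A$ and $a\cdot n_\alpha\to\widehat{a}$ in norm. The crucial structural fact I would establish is that the canonical inclusion $A\hookrightarrow A^{**}$ splits by an $A$-bimodule projection: if $j\colon c_0(I\times I)\hookrightarrow\ell^\infty(I\times I)=A^*$ is the inclusion, then $P:=\iota_A\circ j^*\colon A^{**}\to A^{**}$ is a norm-one projection onto $\widehat{A}$ whose kernel is $c_0(I\times I)^\perp$, and $P$ is an $A$-bimodule homomorphism. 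The last point reduces to checking that $c_0(I\times I)$ is an $A$-subbimodule of $A^*=\ell^\infty(I\times I)$ under both one-sided actions — a routine entrywise computation shows that $a\cdot g$ and $g\cdot a$ lie in $c_0(I\times I)$ whenever $g\in c_0(I\times I)$ and $a\in A$ — whence both $\widehat{A}$ and $c_0(I\times I)^\perp$ are subbimodules of $A^{**}$.

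Next I would push the net down. Set $\mu_\alpha:=P(n_\alpha)\in A$. Since $P$ restricts to the identity on $\widehat{A}$ and intertwines the $A$-actions, $a\mu_\alpha=P(a\cdot n_\alpha)=P(n_\alpha\cdot a)=\mu_\alpha a$ for every $a\in A$, so $\mu_\alpha\in Z(A)$; and $\|a\mu_\alpha-a\|=\|P(a\cdot n_\alpha-\widehat{a})\|\le\|a\cdot n_\alpha-\widehat{a}\|\to 0$, so $(\mu_\alpha)$ is a central approximate identity for $A$. To conclude, a short computation with the matrix units $\varepsilon_{i,j}$ shows that any $\mu\in Z(M_I(\mathbb{C}))$ has all off-diagonal entries zero and all diagonal entries equal to a common scalar $c$; since $\|\mu\|=|I|\,|c|$ must be finite, $c=0$ when $I$ is infinite, i.e.\ $Z(M_I(\mathbb{C}))=\{0\}$. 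But then $\mu_\alpha=0$ for all $\alpha$, hence $a=\lim_\alpha a\mu_\alpha=0$ for every $a\in A$, which is absurd. Therefore $I$ is finite.

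For the converse, when $I$ is finite $M_I(\mathbb{C})$ is biflat with an identity (e.g.\ by \cite[Proposition 2.7]{rams}) and its identity is central, so Lemma \ref{biflat} yields strong pseudo-amenability; alternatively $M_I(\mathbb{C})$ is contractible, hence pseudo-contractible, and one invokes Lemma \ref{pseudo-con}. The main obstacle — indeed the only step needing care — is the bimodule-complementation claim, namely checking that $c_0(I\times I)$ is stable under the two one-sided actions so that the canonical projection $A^{**}\to A$ is a bimodule map; everything after that is bookkeeping. One could instead reproduce verbatim the matrix-entry and weak-closure argument of the preceding theorem, replacing $M_I(\mathbb{C})^{**}$ by $M_I(\mathbb{C})$ throughout and test functionals from the third dual by test functionals from $A^*$, but the route sketched here is shorter.
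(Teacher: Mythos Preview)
Your argument is correct and genuinely different from the paper's. The paper simply says to repeat the proof of the preceding theorem on $M_I(\mathbb{C})^{**}$: starting from $n_\alpha=\pi_A^{**}(m_\alpha)\in A^{**}$, it drops down to $A$ by Goldstine approximation, uses a Mazur/convexity trick to pass from weak to norm, invokes an iterated-limit argument, and finally runs an entrywise analysis of a \emph{bounded} net in $A$ to extract a nonzero constant diagonal, which cannot sit in $\ell^{1}(I\times I)$ for infinite $I$. Your route instead exploits the dual-space structure $M_I(\mathbb{C})=c_0(I\times I)^{*}$: once you observe that $c_0(I\times I)$ is an $A$-subbimodule of $A^{*}=\ell^{\infty}(I\times I)$ (and it is, since each $\varepsilon_{i,j}\cdot g$ is supported in a single column with entries drawn from a column of $g$, hence lies in $c_0$, and $c_0$ is norm-closed in $\ell^{\infty}$), the induced map $j^{*}\colon A^{**}\to c_0^{*}\cong A$ is an $A$-bimodule retraction. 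Applying it to $(n_\alpha)$ immediately yields an honest central approximate identity in $A$, and the elementary computation $Z(M_I(\mathbb{C}))=\{0\}$ for infinite $I$ finishes the job.

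What each approach buys: yours is considerably shorter and more conceptual, sidestepping all the weak-$*$/Mazur machinery by using that $A$ is a dual Banach space with bimodule predual; it also makes transparent that the obstruction is precisely the triviality of $Z(A)$. The paper's approach, while longer, is more portable: it does not rely on $A$ being a dual space or on any bimodule complementation, and the same scheme is what allows the $M_I(\mathbb{C})^{**}$ case to go through in the preceding theorem, where your projection trick does not apply verbatim. Your converse via Lemma~\ref{biflat} (or Lemma~\ref{pseudo-con}) matches the paper's.
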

\begin{Remark}
We give a pseudo-amenable Banach algebra which is not strong pseudo-amenable. 

Let $I$ be an infinite set.  Using  \cite[Proposition 2.7]{rams}, $M_{I}(\mathbb{C})$ is biflat. By \cite[Proposition 3.6]{rost},  $M_{I}(\mathbb{C})$  has an approximate identity. Then \cite[Proposition 3.5]{rost} implies that  $M_{I}(\mathbb{C})$ is pseudo-amenable. But by previous theorem $M_{I}(\mathbb{C})$ is not strong pseudo-amenable.
\end{Remark}
\section{Some applications for Banach algebras related to locally compact groups}
In this section we study strong pseudo-amenability of the  measure algebras, the group algebras and some semigroup algebras related to  locally compact groups.
\begin{Proposition}\label{group algebra}
Let $G$ be a locally compact group. Then $L^{1}(G)$ is strong pseudo-amenable if and only if $G$ is amenable.
\end{Proposition}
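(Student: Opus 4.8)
The plan is to prove both directions via the standard interplay between amenability of $G$, pseudo-contractibility/pseudo-amenability of $L^1(G)$, and the results already established in Section 2. For the forward direction, suppose $L^1(G)$ is strong pseudo-amenable. By Proposition \ref{strong give psudo}, $L^1(G)$ is then pseudo-amenable, and it is a classical result of Ghahramani and Zhang that pseudo-amenability of $L^1(G)$ already forces $G$ to be amenable (indeed, pseudo-amenability of $L^1(G)$ implies $L^1(G)$ has an approximate diagonal in a suitable weak sense, equivalently $G$ is amenable). So this direction is essentially immediate from the earlier proposition together with a citation.

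For the converse, suppose $G$ is amenable. The key point is that an amenable locally compact group $G$ has the property that $L^1(G)$ is biflat: amenability of $G$ is equivalent to amenability of $L^1(G)$ (Johnson's theorem), and amenable Banach algebras are in particular biflat. Moreover $L^1(G)$ always has a bounded approximate identity, and since $G$ is amenable one can arrange a \emph{central} approximate identity — this is exactly the statement that amenable groups admit a bounded approximate identity in $L^1(G)$ consisting of functions that are ``asymptotically central'', which follows from Johnson's characterization (the existence of a bounded approximate diagonal for $L^1(G)$ yields a central bounded approximate identity via applying $\pi_{L^1(G)}$). Then Lemma \ref{biflat} applies directly: $L^1(G)$ is biflat with a central approximate identity, hence strong pseudo-amenable.

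The main obstacle I expect is the verification that $L^1(G)$ has a \emph{central} approximate identity when $G$ is amenable, since the usual approximate identity (an approximate identity supported near the identity of $G$) is not central unless $G$ is, say, a \emph{SIN} group. The correct route is: amenability of $G$ $\Leftrightarrow$ amenability of $L^1(G)$ $\Rightarrow$ $L^1(G)$ has a bounded approximate diagonal $(m_\alpha)$; then $(e_\alpha) := (\pi_{L^1(G)}(m_\alpha))$ is a bounded approximate identity which, because $a\cdot m_\alpha - m_\alpha \cdot a \to 0$, satisfies $a e_\alpha - e_\alpha a \to 0$ for all $a$ — i.e. it is a central approximate identity in the limiting sense required by Lemma \ref{biflat}. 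Alternatively, and more cleanly, one can bypass Lemma \ref{biflat} entirely: amenability of $L^1(G)$ implies pseudo-contractibility is too strong, but amenability does give a bounded approximate diagonal, which is in particular a net $(m_\alpha)$ in $L^1(G)\otimes_p L^1(G) \subseteq (L^1(G)\otimes_p L^1(G))^{**}$ with $a\cdot m_\alpha - m_\alpha\cdot a\to 0$ and $\pi^{**}(m_\alpha)a = a\pi^{**}(m_\alpha) \to a$ after symmetrizing — so strong pseudo-amenability follows directly from the definition. I would present the converse using whichever of these is shortest given the paper's conventions, most likely invoking Lemma \ref{biflat} after noting the central approximate identity comes from Johnson's theorem.
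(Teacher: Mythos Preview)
Your forward direction is fine and matches the paper exactly: strong pseudo-amenability $\Rightarrow$ pseudo-amenability (Proposition~\ref{strong give psudo}) $\Rightarrow$ $G$ amenable via \cite[Proposition~4.1]{ghah pse}.

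For the converse there is a genuine gap in your preferred route. Lemma~\ref{biflat} requires a \emph{central} approximate identity, i.e.\ a net $(e_\alpha)$ with $ae_\alpha = e_\alpha a$ exactly; an asymptotically central net with $ae_\alpha - e_\alpha a \to 0$ is not enough, because the definition of strong pseudo-amenability demands the \emph{exact} equality $\pi_A^{**}(m_\alpha)a = a\pi_A^{**}(m_\alpha)$. But $L^1(G)$ has a genuinely central approximate identity only when $G$ is a SIN group, so Lemma~\ref{biflat} cannot be applied for a general amenable $G$. Your alternative via a bounded approximate diagonal $(m_\alpha)\subseteq A\otimes_p A$ has the same defect: for such a net one only has $\pi_A(m_\alpha)a - a\pi_A(m_\alpha)\to 0$, not equality, and ``symmetrizing'' does not fix this.

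The paper's argument is the clean version of what you are reaching for: amenability of $G$ gives (via Johnson) a \emph{virtual} diagonal, i.e.\ a single element $M\in (L^1(G)\otimes_p L^1(G))^{**}$ with $a\cdot M = M\cdot a$ and $\pi_{L^1(G)}^{**}(M)a = a$ for all $a$. The exact commutation $a\cdot M = M\cdot a$ forces $a\pi_{L^1(G)}^{**}(M) = \pi_{L^1(G)}^{**}(M)a$, so the constant net $m_\alpha \equiv M$ witnesses strong pseudo-amenability directly from the definition. Replace your converse with this one-line observation and the proof is complete.
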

\begin{proof}
Suppose that  $L^{1}(G)$ is strong pseudo-amenable. Then by Proposition \ref{strong give psudo}, $ L^{1}(G)$ is pseudo-amenable. So by \cite[Proposition 4.1]{ghah pse}, $G$ is amenable.

For converse, let $G$ be  amenable. By Johnson theorem  $L^{1}(G)$ is amenable. Therefore there exists $M\in (L^{1}(G)\otimes_{p}L^{1}(G))^{**}$ such that $a\cdot M=M\cdot a$  and $\pi_{L^{1}(G)}^{**}(M)a=a\pi_{L^{1}(G)}^{**}(M)=a$ for every $a\in L^{1}(G)$. Then $L^{1}(G)$ is strong pseudo-amenable.
\end{proof}
\begin{Remark}\label{rem}
In fact in the proof of the previous proposition we showed that, if a Banach algebra $A$ is amenable, then $A$ is strong pseudo-amenable.
\end{Remark}
\begin{Proposition}
Let $G$ be a locally compact group. Then $M(G)$ is strong pseudo-amenable if and only if $G$ is discrete and amenable.
\end{Proposition}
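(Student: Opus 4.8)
The plan is to prove both directions of the equivalence, mirroring the structure already used for $L^1(G)$ in Proposition \ref{group algebra}.

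\textbf{The forward direction.} Suppose $M(G)$ is strong pseudo-amenable. First I would invoke Proposition \ref{strong give psudo} to conclude that $M(G)$ is pseudo-amenable. Now I need to extract the conclusion that $G$ is discrete and amenable from pseudo-amenability of $M(G)$. This is a known type of result: pseudo-amenability of $M(G)$ forces $G$ discrete (so that $M(G) = \ell^1(G)$) and amenable. Concretely, I would argue that if $M(G)$ is pseudo-amenable it has an approximate identity, and an approximate identity in $M(G)$ is bounded, which by a classical result of Taylor (or Dzinotyiweyi) forces $G$ to be discrete; then $M(G) = \ell^1(G)$ and pseudo-amenability of $\ell^1(G) = L^1(G)$ gives amenability of $G$ via \cite[Proposition 4.1]{ghah pse}. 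Alternatively, and perhaps cleaner within the paper's toolkit, one can cite the literature directly for the statement ``$M(G)$ pseudo-amenable $\iff$ $G$ discrete and amenable'' and combine it with Proposition \ref{strong give psudo}.

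\textbf{The converse.} Suppose $G$ is discrete and amenable. Then $M(G) = \ell^1(G) = L^1(G)$, and by Johnson's theorem $L^1(G)$ is amenable; by Remark \ref{rem} an amenable Banach algebra is strong pseudo-amenable, so $M(G)$ is strong pseudo-amenable. (Equivalently, one could just apply Proposition \ref{group algebra} once $M(G) = L^1(G)$ is identified.)

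\textbf{Main obstacle.} The only genuinely nontrivial point is the forward direction's claim that pseudo-amenability (or even strong pseudo-amenability) of $M(G)$ forces $G$ to be discrete. Pseudo-amenability yields only an unbounded approximate diagonal, so one cannot immediately appeal to the bounded-approximate-identity criterion for discreteness. The standard fix is that pseudo-amenability still produces a (two-sided, possibly unbounded) approximate identity $(e_\alpha)$ in $M(G)$, and one then uses the structure of $M(G)$ — specifically that $M(G)$ possesses an approximate identity at all is already restrictive — together with a known characterization (Dzinotyiweyi's theorem: $M(G)$ has a bounded approximate identity, or even certain unbounded ones, only when $G$ is discrete) to rule out the non-discrete case. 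I would handle this by citing the relevant result for $M(G)$ rather than reproving it, so the proof reduces to two short citations plus Proposition \ref{strong give psudo} and Remark \ref{rem}.
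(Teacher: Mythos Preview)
Your proposal is correct and matches the paper's proof almost exactly: the paper also applies Proposition~\ref{strong give psudo} and then cites \cite[Proposition~4.2]{ghah pse} for the forward direction (precisely your ``cleaner'' alternative), and for the converse uses Remark~\ref{rem} after establishing amenability of $M(G)$. The only cosmetic difference is that for the converse the paper cites \cite{dale ghah hel} directly for the amenability of $M(G)$, whereas you identify $M(G)=\ell^{1}(G)$ and invoke Johnson's theorem; these are of course equivalent in the discrete case, so there is no substantive gap.
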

\begin{proof}
Suppose that  $M(G)$ is strong pseudo-amenable. Then by Proposition \ref{strong give psudo}, $M(G)$ is pseudo-amenable. So by \cite[Proposition 4.2]{ghah pse}, $G$ is discrete and  amenable.

For converse, let $G$ discrete and   amenable. Then by the  main result of \cite{dale ghah hel},  $M(G)$  is amenable. Applying Remark \ref{rem}, implies that $M(G)$ is strong pseudo-amenable.
\end{proof}
\begin{Proposition}
Let $G$ be a locally compact group. Then ${L^{1}(G)}^{**}$ is strong pseudo-amenable if and only if $G$ is finite.
\end{Proposition}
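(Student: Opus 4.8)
The plan is to treat the two implications separately. For the converse, if $G$ is finite then $L^{1}(G)$ is finite dimensional, so $L^{1}(G)^{**}=L^{1}(G)$, which is amenable (a finite dimensional commutative semisimple algebra, isomorphic to $\mathbb{C}^{|G|}$); by Remark \ref{rem} it is then strong pseudo-amenable. So the substance is the forward direction.

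Assume $L^{1}(G)^{**}$ is strong pseudo-amenable. By Proposition \ref{strong give psudo} it is pseudo-amenable; a short argument using Corollary \ref{ap phi} (applied to the canonical extension to $L^{1}(G)^{**}$ of the augmentation character of $L^{1}(G)$), together with a Goldstine--Mazur descent of the resulting net to $L^{1}(G)$, shows that $G$ is amenable, whence $L^{1}(G)$ is amenable by Johnson's theorem. The decisive point is to upgrade amenability to finiteness, and for this one must use the second-dual nature of the definition. Write $A=L^{1}(G)$ and let $(m_{\alpha})$ in $(A^{**}\otimes_{p}A^{**})^{**}$ be a net as in the definition of strong pseudo-amenability; set $n_{\alpha}=\pi^{**}_{A^{**}}(m_{\alpha})\in A^{****}$. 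Then $n_{\alpha}$ is honestly central for $A^{**}$ and $n_{\alpha}a=an_{\alpha}\to a$ for every $a\in A^{**}$, so $n_{\alpha}\neq 0$ for large $\alpha$; fix such an $\alpha$. I would now proceed exactly as in the proof that $M_{I}(\mathbb{C})^{**}$ is strong pseudo-amenable only for finite $I$: by Goldstine's theorem approximate $n_{\alpha}$ first by a norm-bounded net in $A^{**}$ and then by one in $A$, use Mazur's lemma to pass from weak to norm convergence, and invoke the iterated-limit theorem \cite[p.~69]{kel}, thereby obtaining a norm-bounded net in $A=L^{1}(G)$ that is weakly central and converges weak$^{*}$ to $n_{\alpha}\neq 0$. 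For $G$ infinite no such net can exist — by the same $\ell^{1}$-type norm blow-up that obstructs $M_{I}(\mathbb{C})^{**}$ for infinite $I$, and consistently with the Ghahramani--Loy--Willis rigidity theorem that $L^{1}(G)^{**}$ is amenable only for finite $G$. Hence $G$ is finite.

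The main obstacle is precisely this last extraction. Pseudo-amenability of $L^{1}(G)^{**}$ by itself does not separate finite from infinite amenable groups, so one genuinely needs both that the approximating net lies in the bidual and that $\pi^{**}$ of it is exactly, not merely approximately, central. Turning the resulting norm-bounded central net in $L^{1}(G)$ into a contradiction with $|G|=\infty$ requires care — particularly for discrete $G$, where $L^{1}(G)$ is already unital, so the crude $\ell^{1}$-blow-up must be refined, for instance by comparing $n_{\alpha}$ with the mixed identity of $L^{1}(G)^{**}$ and excluding that the extracted net concentrates there.
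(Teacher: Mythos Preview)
Your converse is essentially the paper's, with one small slip: for a finite non-abelian $G$ the group algebra $L^{1}(G)=\mathbb{C}[G]$ is \emph{not} isomorphic to $\mathbb{C}^{|G|}$ but to a direct sum of matrix algebras; it is nevertheless finite-dimensional and semisimple, hence amenable, so the conclusion via Remark~\ref{rem} stands.

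For the forward direction there is a genuine gap, and it stems from a factual error. You assert that ``pseudo-amenability of $L^{1}(G)^{**}$ by itself does not separate finite from infinite amenable groups'', but this is precisely what \cite[Proposition~4.2]{ghah pse} establishes: if $L^{1}(G)^{**}$ is pseudo-amenable then $G$ is finite. The paper's proof is therefore one line --- Proposition~\ref{strong give psudo} gives pseudo-amenability of $L^{1}(G)^{**}$, and the cited result of Ghahramani and Zhang finishes. Your elaborate programme (amenability of $G$ via Corollary~\ref{ap phi}, Goldstine descent of the central net $n_{\alpha}$ to $L^{1}(G)$, an $\ell^{1}$-type blow-up modelled on the $M_{I}(\mathbb{C})^{**}$ argument) is not needed.

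Worse, your alternative route is not actually completed: you acknowledge that the final contradiction ``requires care'' and that the ``crude $\ell^{1}$-blow-up must be refined'', particularly for infinite discrete amenable $G$ where $L^{1}(G)$ is unital, but you do not supply the refinement. The analogy with $M_{I}(\mathbb{C})$ is fragile here, since in that case the diagonal entries of a central element are forced to be equal, whereas $L^{1}(G)$ has a large centre (supported on conjugacy classes) and a bounded weakly-central approximating net need not blow up in norm. So as written the proposal is both unnecessarily complicated and incomplete; the correct fix is simply to invoke \cite[Proposition~4.2]{ghah pse} after Proposition~\ref{strong give psudo}.
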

\begin{proof}
Suppose that  ${L^{1}(G)}^{**}$ is strong pseudo-amenable. Then by Proposition \ref{strong give psudo} ${L^{1}(G)}^{**}$ is pseudo-amenable. So by \cite[Proposition 4.2]{ghah pse}  $G$ is finite.

For converse, let $G$ finite . Clearly  ${L^{1}(G)}^{**}$  is amenable. Applying Remark \ref{rem}, implies that $M(G)$ is strong pseudo-amenable.
\end{proof}

We present  some notions of semigroup theory, Our standard reference of semigroup theory is  \cite{how}. Let $S$ be a semigroup and let $E(S)$ be
the set of its idempotents. There exists a  partial order on $E(S)$ which is defined by
$$s\leq t\Longleftrightarrow s=st=ts\quad (s,t\in E(S)).$$ A  semigroup $S$ is called inverse semigroup, if for every $s\in S$
there exists $s^{*}\in S$ such that $ss^{*}s=s^{*}$ and
$s^{*}ss^{*}=s$. If $S$ is an inverse semigroup,
then there exists a partial order on $S$ which  coincides with the
partial order on $E(S)$. Indeed
$$s\leq t\Longleftrightarrow s=ss^{*}t\quad (s,t\in
S).$$ For every  $x\in S$, we denote $(x]=\{y\in S|\,y\leq x\}$. $S$
is called locally finite (uniformly locally finite) if for each
$x\in S$, $|(x]|<\infty\,\,(\sup\{|(x]|\,:\,x\in S\}<\infty)$,
respectively.

Suppose that $S$ is an inverse semigroup. Then  the maximal subgroup
of $S$ at $p\in E(S)$ is denoted by $G_{p}=\{s\in
S|ss^{*}=s^{*}s=p\}$.

 Let $S$ be an inverse semigroup. There exists an equivalence relation $\mathfrak{D}$ on $S$
such that $s\mathfrak{D}t$ if and only if there exists $x\in S$ such
that $ss^{*}=xx^{*}$ and $t^{*}t=x^{*}x$. We denote
$\{\mathfrak{D}_{\lambda}:\lambda\in \Lambda\}$ for the collection
of $\mathfrak{D}$-classes and $E(\mathfrak{D}_{\lambda})=E(S)\cap
\mathfrak{D}_{\lambda}.$
\begin{Theorem}\label{inverse}
Let $S$ be an inverse semigroup such that $E(S)$ is uniformly
locally finite. Then the following are equivalent:
\begin{enumerate}
\item [(i)] $\ell^{1}(S)$ is strong pseudo-amenable;
\item [(ii)] Each maximal subgroup of $S$ is amenable and each $\mathfrak{D}$-class has finitely many
idempotent elements.
\end{enumerate}
\end{Theorem}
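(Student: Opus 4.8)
The plan is to reduce everything to the structure theory of uniformly locally finite inverse semigroups, combined with the matrix-algebra results of Section~3 and the known characterization of pseudo-amenability for such $\ell^1(S)$. The starting point is the standard decomposition: when $E(S)$ is uniformly locally finite, the results of Ramsden (cf. \cite{rams}, used already in the excerpt) give an $\ell^1$-direct-sum/approximation description of $\ell^1(S)$ in terms of the building blocks $M_{I_\lambda}(\ell^1(G_\lambda))$, where $G_\lambda$ is the maximal subgroup at an idempotent of $\mathfrak{D}_\lambda$ and $I_\lambda = |E(\mathfrak{D}_\lambda)|$. Since $M_{I_\lambda}(\ell^1(G_\lambda)) \cong \ell^1(G_\lambda)\otimes_p M_{I_\lambda}(\mathbb{C})$ via the isometric isomorphism $\theta$ recalled before the matrix theorems, each block is of the form handled by Section~3.

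For (i)$\Rightarrow$(ii): Suppose $\ell^1(S)$ is strong pseudo-amenable. For a fixed $\mathfrak{D}$-class $\mathfrak{D}_\lambda$, I would exhibit $M_{I_\lambda}(\ell^1(G_\lambda))$ as a quotient of $\ell^1(S)$ (or of a closed subalgebra, then use a further quotient) — this is the usual move: the linear span of the elements in $\mathfrak{D}_\lambda$ together with a suitable ideal quotient yields $\ell^1(M^0(G_\lambda, I_\lambda))$ or directly $M_{I_\lambda}(\ell^1(G_\lambda))$. By Proposition~\ref{epi} strong pseudo-amenability passes to this quotient, so $M_{I_\lambda}(\ell^1(G_\lambda))$ is strong pseudo-amenable. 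Now $\ell^1(G_\lambda)$ has a non-zero idempotent's worth of structure: writing $M_{I_\lambda}(\ell^1(G_\lambda)) \cong \ell^1(G_\lambda)\otimes_p M_{I_\lambda}(\mathbb{C})$, if $I_\lambda$ were infinite I would derive a contradiction by an argument paralleling the proof of Theorem~3.5 (the $M_I(\mathbb{C})$ case): apply Corollary~\ref{ap phi} with the character coming from a point-mass functional composed with the trivial character on $\ell^1(G_\lambda)$, and run the diagonal-matrix computation showing that an approximately $\phi$-amenable net would force a norm-summability contradiction over the infinite index set. Hence $I_\lambda$ is finite, i.e.\ $E(\mathfrak{D}_\lambda)$ is finite. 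With $I_\lambda$ finite, $M_{I_\lambda}(\ell^1(G_\lambda))\cong \ell^1(G_\lambda)\otimes_p M_{I_\lambda}(\mathbb{C})$, and $M_{I_\lambda}(\mathbb{C})$ has a non-zero idempotent; by Lemma~\ref{idempotent} (with the roles arranged so that $B = M_{I_\lambda}(\mathbb{C})$), strong pseudo-amenability of the tensor product forces strong pseudo-amenability of $\ell^1(G_\lambda)$, which by Proposition~\ref{group algebra} is equivalent to amenability of $G_\lambda$. This gives (ii).

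For (ii)$\Rightarrow$(i): Assume each $G_\lambda$ is amenable and each $I_\lambda$ is finite. Then by Johnson's theorem each $\ell^1(G_\lambda)$ is amenable, hence each $M_{I_\lambda}(\ell^1(G_\lambda))$ is amenable (amenability is stable under tensoring with $M_{I_\lambda}(\mathbb{C})$ when $I_\lambda$ is finite, $M_{I_\lambda}(\mathbb{C})$ being finite-dimensional and amenable), so by Remark~\ref{rem} each block is strong pseudo-amenable. The remaining task is to glue the block-wise strong-pseudo-amenable nets into a single net for $\ell^1(S)$, using the $\ell^1$-sum structure and the uniform bound on $|E(\mathfrak{D}_\lambda)|$; this is done as in the known pseudo-amenability arguments for uniformly locally finite $\ell^1(S)$ (e.g.\ \cite{maede}, \cite{rost1}), replacing "net in $A\otimes_p A$'' by "net in $(A\otimes_p A)^{**}$'' throughout and noting that $\pi^{**}$ respects the direct-sum decomposition. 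The uniform local finiteness of $E(S)$ is exactly what makes the assembled net well-behaved.

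The main obstacle I anticipate is the (i)$\Rightarrow$(ii) direction, specifically isolating a single $M_{I_\lambda}(\ell^1(G_\lambda))$ as a genuine quotient of $\ell^1(S)$ rather than merely a subquotient, and then pushing the infinite-index contradiction through with $\ell^1(G_\lambda)$ in place of $\mathbb{C}$ — the diagonal-net computation of Theorem~3.5 used scalar entries and the fact that $\mathbb{C}$ is a Hilbert space, so one must check the argument survives with group-algebra entries (using the character $\phi = 1_{G_\lambda}$ to project back to scalars). The gluing in (ii)$\Rightarrow$(i) is routine once the uniform bound is in hand, but writing the iterated-limit bookkeeping cleanly will take some care.
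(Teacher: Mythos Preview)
Your proposal is correct in outline and uses the same Ramsden decomposition as the paper, but in two places you take a harder road than necessary, and the paper's shortcuts are worth knowing.

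For (i)$\Rightarrow$(ii), the ``main obstacle'' you flag --- pushing the diagonal computation of Theorem~\ref{main} through with $\ell^1(G_\lambda)$-entries --- simply evaporates if you apply Lemma~\ref{idempotent} the \emph{other} way first. Since $\ell^1(G_\lambda)$ has a non-zero idempotent (namely $\delta_e$) and $M_{I_\lambda}(\ell^1(G_\lambda)) \cong M_{I_\lambda}(\mathbb{C})\otimes_p \ell^1(G_\lambda)$, Lemma~\ref{idempotent} with $B=\ell^1(G_\lambda)$ yields that $M_{I_\lambda}(\mathbb{C})$ itself is strong pseudo-amenable; then Theorem~\ref{main} gives $|I_\lambda|<\infty$ directly, with no need to rework the scalar argument or worry about the Hilbert-space step. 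This is exactly the paper's route: it invokes Lemma~\ref{idempotent} twice, once with $B=\ell^1(G_\lambda)$ to extract finiteness of $I_\lambda$, and once with $B=M_{I_\lambda}(\mathbb{C})$ to extract strong pseudo-amenability of $\ell^1(G_\lambda)$. (Your concern about obtaining a \emph{genuine} quotient is also unfounded: in an $\ell^1$-direct sum each summand is a quotient via the coordinate projection, so Proposition~\ref{epi} applies immediately.)

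For (ii)$\Rightarrow$(i), the paper avoids any explicit gluing of block-wise nets. Instead it notes that each $M_{I_\lambda}(\mathbb{C})\otimes_p\ell^1(G_\lambda)$ is $1$-biflat (combining Johnson's theorem with \cite[Propositions~2.5 and~2.7]{rams}), so by \cite[Proposition~2.3]{rams} the $\ell^1$-sum $\ell^1(S)$ is $1$-biflat; since each block is unital (finiteness of $I_\lambda$), $\ell^1(S)$ has a central approximate identity, and Lemma~\ref{biflat} finishes. Your iterated-limit assembly would also succeed, but the biflatness route is cleaner and sidesteps the bookkeeping entirely.
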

\begin{proof}
Let $\ell^{1}(S)$ be strong pseudo-amenable. Since $S$
is a uniformly locally finite inverse semigroup, using \cite[Theorem 2.18]{rams} we have
 $$\ell^{1}(S)\cong
\ell^{1}-\bigoplus\{M_{E(\mathfrak{D}_{\lambda})}(\ell^{1}(G_{p_{\lambda}}))\}.$$ Thus $M_{E(\mathfrak{D}_{\lambda})}(\ell^{1}(G_{p_{\lambda}}))$ is a homomorphic image of  $\ell^{1}(S).$ Then by  Proposition \ref{epi} strong pseudo-amenability of $\ell^{1}(S)$ implies that $M_{E(\mathfrak{D}_{\lambda})}(\ell^{1}(G_{p_{\lambda}}))$  is strong pseudo-amenable. It is well-known that  $\ell^{1}(G_{p_{\lambda}})$ has an identity (then has an idempotent element). Hence by Lemma \ref{idempotent}, $M_{E(\mathfrak{D}_{\lambda})}$ is strong pseudo-amenable. Now by Theorem \ref{main}, $E(\mathfrak{D}_{\lambda})$ is finite. Also since   $M_{E(\mathfrak{D}_{\lambda})}$ has an idempotent, again by Lemma \ref{idempotent}, $\ell^{1}(G_{p_{\lambda}})$ is strong pseudo-amenable. Applying Proposition \ref{group algebra}, $G_{p_{\lambda}}$ is amenable.

For converse, suppose that   $E(\mathfrak{D}_{\lambda})$ is finite
and  $G_{p_{\lambda}}$ is amenable for every $\lambda$.  Johnson
theorem implies that $\ell^{1}(G_{p_{\lambda}})$ is $1$-amenable (so it is
$1$-biflat).  By  \cite[Proposition 2.7]{rams} it follows that 
$M_{E(\mathfrak{D}_{\lambda})}(\mathbb{C})$ is $1$-biflat.
So   \cite[Proposition 2.5]{rams} gives  that
$M_{E(\mathfrak{D}_{\lambda})}(\mathbb{C})\otimes_{p}\ell^{1}(G_{p_{\lambda}})$
is $1$-biflat. Using $$\ell^{1}(S)\cong
\ell^{1}-\bigoplus\{M_{E(\mathfrak{D}_{\lambda})}(\ell^{1}(G_{p_{\lambda}}))\},$$
and \cite[Proposition 2.3]{rams}, we have $\ell^{1}(S)$ is $1$-biflat. The finiteness of 
 $E(\mathfrak{D}_{\lambda})$ deduces that 
$M_{E(\mathfrak{D}_{\lambda})}(\mathbb{C})\otimes_{p}\ell^{1}(G_{p_{\lambda}})$
has an identity.  Therefore  It is easy to see that  $\ell^{1}(S)$ has a central
approximate identity. Now by Lemma\ref{biflat},  biflatness of $\ell^{1}(S)$ gives that $\ell^{1}(S)$ is strong pseudo-amenable.
\end{proof}
We recall that a Banach algebra $A$ is approximately amenable, if
for each Banach $A$-bimodule $X$ and each bounded derivation
$D:A\rightarrow X^{*}$ there exists a net $(x_{\alpha})$ in $X^{*}$
such that $$D(a)=\lim_{\alpha}a\cdot x_{\alpha}-x_{\alpha}\cdot
a\quad (a\in A),$$ for more  details  see \cite{gen 1} and \cite{gen 2}.

For a locally compact group $G$ and a non-empty set $I$, set
$$M^{0}(G,I)=\{(g)_{i,j}:g\in G,i,j\in I\}\cup \{0\},$$
where $(g)_{i,j}$ denotes the $I\times I$ matrix with  $g$ in
$(i,j)$-position and zero elsewhere. With the following
multiplication $M^{0}(G,I)$ becomes a semigroup
\begin{eqnarray*}�
&\textit{�$(g)_{i,j}\ast
(h)_{k,l}=$}\begin{cases}(gh)_{il}\,\,\,\,\,\,\,\,\,\,\,\,\,\,\,\,\,\,
j=k\cr � 0\,\,\,\,\,\,\,\,\,\,\,\,\,\,\,\,\,\,     \qquad j\neq k,
�\end{cases}\\�
\end{eqnarray*}�
It is well known that $M^{0}(G,I)$ is an inverse semigroup with
$(g)^{*}_{i,j}=(g^{-1})_{j,i}$. This semigroup is called Brandt
semigroup over $G$ with index set $I,$ which by the arguments as in \cite[Corollary 3.8]{rost}, $M^{0}(G,I)$  becomes a uniformly locally finite inverse semigroup.

\begin{Theorem}\label{brandt}
Let $S=M^{0}(G,I)$ be a Brandt semigroup.   Then the following
are equivalent:
\begin{enumerate}
\item [(i)] $\ell^{1}(S)$ is strong pseudo-amenable;
\item [(ii)] $G$ is amenable and $I$ is finite;
\item [(ii)] $\ell^{1}(S)$ is approximately amenable.
\end{enumerate}
\end{Theorem}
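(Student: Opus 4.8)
The plan is to prove the theorem by establishing the cycle of implications $(i)\Rightarrow(ii)\Rightarrow(iii)\Rightarrow(i)$, exploiting the fact that a Brandt semigroup is precisely an inverse semigroup with a single $\mathfrak{D}$-class (namely $E(\mathfrak{D})=I$ essentially, with maximal subgroup $G$) so that Theorem \ref{inverse} does most of the work for the equivalence of $(i)$ and $(ii)$. For $(i)\Rightarrow(ii)$, I would observe that $S=M^{0}(G,I)$ is a uniformly locally finite inverse semigroup whose nonzero $\mathfrak{D}$-class has idempotent set in bijection with $I$ and maximal subgroup isomorphic to $G$; hence $E(S)$ is uniformly locally finite and Theorem \ref{inverse} applies, giving that $\ell^{1}(S)$ strong pseudo-amenable forces $G$ amenable and $I$ finite. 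Conversely $(ii)\Rightarrow(i)$ is the other direction of Theorem \ref{inverse}.

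For the remaining equivalence involving approximate amenability, I would close the loop via $(ii)\Rightarrow(iii)\Rightarrow(i)$ (or $(i)\Rightarrow(iii)$, whichever is cleaner). For $(ii)\Rightarrow(iii)$: when $I$ is finite and $G$ is amenable, $\ell^{1}(S)\cong \ell^{1}-\bigoplus$ is a finite direct sum reducing to $M_{I}(\ell^{1}(G))\oplus\mathbb{C}$ (the extra summand coming from the adjoined zero, depending on the chosen decomposition); since $G$ is amenable $\ell^{1}(G)$ is amenable, $M_{I}(\ell^{1}(G))\cong \ell^{1}(G)\otimes_{p}M_{I}(\mathbb{C})$ is amenable for finite $I$, and amenable algebras are approximately amenable, and approximate amenability is preserved under finite direct sums and unitization — so $\ell^{1}(S)$ is approximately amenable. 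Alternatively one can cite the Rostami–Pourabbas type result directly. For $(iii)\Rightarrow(i)$ I would either invoke that approximate amenability implies approximate left $\phi$-amenability and run an argument parallel to Theorem \ref{main}/Theorem \ref{inverse} to force $I$ finite, then conclude via $(ii)\Rightarrow(i)$; or more simply note approximate amenability of $\ell^{1}(M^{0}(G,I))$ is already known (Dales–Loy–Zhang / Pourabbas–Esslamzadeh) to be equivalent to ``$G$ amenable and $I$ finite'', giving $(iii)\Rightarrow(ii)$ outright and closing the cycle.

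The main obstacle I anticipate is the passage $(iii)\Rightarrow(ii)$ if one insists on a self-contained argument rather than quoting the literature: approximate amenability is a weaker hypothesis than strong pseudo-amenability and does not obviously hand over a net in $(A\otimes_p A)^{**}$, so the matrix-coordinate argument used in Theorem \ref{main} must be adapted to derivations. The cleanest route is to observe that approximate amenability of $A$ implies approximate left $\phi$-amenability for every $\phi\in\Delta(A)$, which is exactly the hypothesis that was leveraged in the proof of Theorem \ref{main} to force $|I|=1$ for $UP(I,A)$; the identical coordinate computation — using the character $(g)_{i,j}\mapsto\mathbf{1}$ on $\ell^{1}(G_{p_\lambda})$ lifted to the matrix algebra, applied to the off-diagonal element $\varepsilon_{i_0,i}$ — forces finiteness of $I$, after which amenability of $G$ follows from approximate amenability descending to the corner $\ell^{1}(G)\cong \varepsilon_{i_0,i_0}\,\ell^{1}(S)\,\varepsilon_{i_0,i_0}$. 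With $(ii)$ in hand, $(i)$ follows from Theorem \ref{inverse}, completing the triangle.

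Finally I would record the remark that the theorem exhibits the first examples where strong pseudo-amenability, pseudo-contractibility and pseudo-amenability genuinely differ: for $S=M^{0}(G,I)$ with $G$ an infinite amenable group and $I$ finite, $\ell^{1}(S)$ is strong pseudo-amenable (by the theorem) and pseudo-amenable but not pseudo-contractible (since pseudo-contractibility would force $G$ finite by the Rostami–Pourabbas result quoted in the introduction), while for $G$ finite and $I$ infinite $\ell^{1}(S)$ is pseudo-amenable but, by the matrix results of Section 3, not strong pseudo-amenable.
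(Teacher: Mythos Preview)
Your overall proof is correct and follows essentially the same strategy as the paper: establish $(i)\Leftrightarrow(ii)$ internally and handle $(ii)\Leftrightarrow(iii)$ by quoting the literature. The only structural difference is that for $(i)\Rightarrow(ii)$ you invoke Theorem~\ref{inverse} as a black box (legitimate, since the paper records just before the statement that $M^{0}(G,I)$ is uniformly locally finite), whereas the paper reproves this implication directly via the Duncan--Namioka decomposition $\ell^{1}(S)\cong [M_{I}(\mathbb{C})\otimes_{p}\ell^{1}(G)]\oplus_{1}\mathbb{C}$ together with Proposition~\ref{epi}, Lemma~\ref{idempotent}, Theorem~\ref{main}, and Proposition~\ref{group algebra}. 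Your route is shorter; the paper's is more self-contained. For $(ii)\Leftrightarrow(iii)$ the paper simply cites \cite{sadr} (Sadr--Pourabbas), which is the correct reference rather than Dales--Loy--Zhang or an Esslamzadeh paper.

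One caution about your \emph{alternative} self-contained plan for $(iii)\Rightarrow(ii)$: the character/approximate-left-$\phi$-amenability argument you sketch does not transplant from the upper-triangular theorem to $M_{I}(\mathbb{C})\otimes_{p}\ell^{1}(G)$. The upper-triangular argument produces $|I|=1$, not merely $|I|<\infty$, and it relies on a character on $UP(I,A)$ coming from the $(i_{0},i_{0})$-corner; for $|I|>1$ there is no character on $M_{I}(\mathbb{C})$, and the augmentation character on $\ell^{1}(G)$ does not lift multiplicatively to the matrix algebra. The finiteness of $I$ in Section~3 is obtained by a quite different argument (a bounded central net in the bidual and a coordinatewise weak-limit analysis), which genuinely uses the net furnished by strong pseudo-amenability rather than $\phi$-amenability. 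Since you also offer the literature citation as your ``simpler'' route, this does not affect the validity of your proof, but the self-contained variant as written would not go through.
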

\begin{proof}(i)$\Rightarrow$(ii) Using  \cite[Remark, p 315]{dun-nam}, we know that $\ell^{1}(S)$
is isometrically isomorphic with
$[M_{I}(\mathbb{C})\otimes_{p}\ell^{1}(G)]\oplus_{1}\mathbb{C}$. Applying Proposition \ref{epi}, $M_{I}(\mathbb{C})\otimes_{p}\ell^{1}(G)$ is strong  pseudo-amenable. Since  $\ell^{1}(G)$ has an identity, by Lemma \ref {idempotent} $M_{I}(\mathbb{C})$ is strong pseudo-amenable. Hence by Theorem \ref{main}, $I$ must be finite. On the other hand the  finiteness of $I$ implies that  $M_{I}(\mathbb{C})$ has a unit. So  Lemma \ref{idempotent}  implies that $\ell^{1}(G)$ is strong amenable. Now by Proposition \ref{group algebra} $G$ is amenable.

(ii)$\Rightarrow$(i) Similar to the proof of $(ii)\Rightarrow (i)$ of 
previous theorem.

(ii)$\Leftrightarrow$(iii) By the main result of \cite{sadr}, it is
clear.
\end{proof}
\begin{Remark}
There exists a pseudo-amenable  semigroup algebra  which is not strong pseudo-amenable. 

To see this, let $G$ be an  amenable locally compact group. Suppose that $I$ is an infinite set. By \cite[Corollary 3.8]{rost} $\ell^{1}(S)$ is pseudo-amenable but using previous theorem $\ell^{1}(S)$  is not strong pseudo-amenable, whenever $S=M^{0}(G,I)$ is a Brandt semigroup.

Also  there exists a strong  pseudo-amenable  semigroup algebra  which is not pseudo-contractible. 

To see this,  let $G$ be an infinite  amenable group. Suppose that $I$ is a finite set. By previous theorem $\ell^{1}(S)$ is strong pseudo-amenable but \cite[Corollary 2.5]{rost1} implies that $\ell^{1}(S)$  is not pseudo-contractible, whenever  $S=M^{0}(G,I)$ is  a Brandt semigroup.

\end{Remark}
\section{Examples}
\begin{Example}\label{ex1}
We present some strong pseudo-amenable Banach algebras which is not amenable.  
\begin{enumerate}
\item [(i)]  Suppose that $G$ is the integer Heisenberg group. We know that $G$ is discrete and amenable, see \cite{run}.  Therefore by the main result of \cite{for-run} the Fourier algebra $A(G)$ is not amenable. But by the  Leptin theorem (see\cite{run}), the  amenability of $G$ implies that $A(G) $  has a bounded approximate identity. Since $A(G)$ is a commutative Banach algebra,  $A(G)$  has a central approximate identity. Hence \cite[Theorem 4.2]{sam} follows that $A(G)$ is pseudo-contractible. Now  by Lemma \ref{pseudo-con}  $A(G)$ is strong pseudo-amenable.

\item [(ii)] Let $S=\mathbb{N}$. Equip $S$ with ${\it max}$ as its product. Then the semigroup algebra $\ell^{1}(S)$ is not amenable. To see this on contrary suppose that   $\ell^{1}(S)$ is amenable. Then by \cite{dun-nam},  $E(S)$ must be finite which is impposible. We claim that  $\ell^{1}(S)$ is strong pseudo-amenable. By \cite[p. 113]{dale lau struss} $\ell^{1}(S)$ is approximate amenable. Since  $\ell^{1}(S)$ has an identity,    $\ell^{1}(S)$ is pseudo-amenable. So commutativity of  $\ell^{1}(S)$ follows that  $\ell^{1}(S)$ is strong pseudo-amenable. 

\item [(iii)] Let $S=\mathbb{N}\cup\{0\}$.  With the following action
\begin{eqnarray*}�
&\textit{$m\ast
n=$}\begin{cases}m\,\,\,\,\,\,\,\,\,\,\,\,\,\,\,\,\,\ if\qquad
m=n\cr � 0\,\,\,\,\,\,\,\,\,\,\,\,\,\,\,\,\,\,\,\,if\qquad m\neq n,
�\end{cases}\\�
\end{eqnarray*}�   $S$ becomes a
semigroup. Clearly $S$ is commutative and $E(S)=\mathbb{N}\cup\{0\}.$  So by\cite{dun-nam}, $\ell^{1}(S)$  is not amenable. On the other hand since $S$ is a uniformly locally finite semilattice, \cite[Corollary 2.7]{rost1} implies that $\ell^{1}(S)$  is pseudo-contractible. Thus by Lemma \ref{pseudo-con}  $\ell^{1}(S)$ is strong pseudo-amenable. 
\end{enumerate}
\end{Example}

\begin{Example}
We give a strong pseudo-amenable Banach algebra which is not approximately amenable. 
 
A Banach algebra $A$ is 
approximately biprojective  if there exists a (not nessecarily bounded ) net $(\rho_{\alpha})$
of bounded linear $A$-bimodule morphisms from $A$ into
$A\otimes_{p}A$ such that $\pi_{A}\circ\rho_{\alpha}(a)-a\rightarrow
0,$ for every $a\in A,$ see \cite{zhang}. Suppose that
$A=\ell^{2}(\mathbb{N})$. With the pointwise multiplication, $A$
becomes a Banach algebra. By the main result of \cite{dale zhang},
$A$ is not approximately amenable. But by \cite[Example
p-3239]{zhang}, $A$ is an approximately biprojective Banach algebra
with a central approximate identity. Then by \cite[Proposition
3.8]{ghah pse}, $A$ is pseudo-contractible. It follows that  $A$ is strong pseudo-amenable.
\end{Example}
\begin{Example}
We give a biflat semigroup algebra which is not strong pseudo-amenable. So we can not remove the hypothesis " the existence of central approximate identity" from  Lemma \ref{biflat}.

Let $S$ be the right-zero semigroup with $|S|>1$, that is, $st=t$ for every $s,t\in S$. We denote $\phi_{S}$ for the augmentation character on $\ell^{1}(S)$. It is easy to show that $f g=\phi_{S}(f)g$. Pick $f_{0}\in \ell^{1}(S)$ such that $\phi_{S}(f_{0})=1$. Define $\rho:\ell^{1}(S)\rightarrow \ell^{1}(S)\otimes_{p}\ell^{1}(S)$ by $\rho(f)=f_{0}\otimes f$. It is easy to see that $\pi_{\ell^{1}(S)}\circ\rho(f)=f$ and $\rho$ is a bounded $\ell^{1}(S)$-bimodule morphism. So $\ell^{1}(S)$ is biflat. Suppose conversely that $\ell^{1}(S)$ is strong pseudo-amenable. So by Proposition \ref{strong give psudo}  $\ell^{1}(S)$ is pseudo-amenable. Hence $\ell^{1}(S)$ has an approximate identity, say $(e_{\alpha})$. It leads that $$f_{0}=\lim f_{0}e_{\alpha}=\lim \phi_{S}(f_{0})e_{\alpha}=\lim e_{\alpha}.$$ Suppose that $s_{1}, s_{2}$ be two arbitrary elements  in $S$. Thus $\delta_{s_{1}}=\lim \delta_{s_{1}} e_{\alpha}=\delta_{s_{1}} f_{0}=f_{0}$ and  $\delta_{s_{2}}=\lim \delta_{s_{2}} e_{\alpha}=\delta_{s_{2}} f_{0}=f_{0}$ which implies that $\delta_{s_{1}}=\delta_{s_{2}}$. Then $s_{1}=s_{2}$. Therefore $|S|=1.$ which is a contradiction.
\end{Example}
\begin{Remark}
A Banach algebra $A$ is called Johnson pseudo-contractible, if there exists a net $(m_{\alpha})_{\alpha}$ in $(A\otimes_{p}A)^{**}$ such that $$a\cdot m_{\alpha}=m_{\alpha}\cdot a, \quad \pi^{**}_{A}(m_{\alpha})a\rightarrow a,\qquad (a\in A). $$ For more information about Johnson pseudo-contractibility, see \cite{sah new amen}. By Example \ref{ex1}[ii] we know that $\ell^{1}(\mathbb{N}_{max})$ is  strong pseudo-amenable. But by \cite[Example 2.6]{askar}  $\ell^{1}(\mathbb{N}_{max})$ is not Johnson pseudo-contractible.
\end{Remark}
\begin{small}

\end{small}

\end{document}